%
%
%

\documentclass[12pt,twoside,a4paper]{book}


\usepackage{CJKutf8}

\usepackage[T1]{fontenc}
\usepackage[brazil]{babel}
\usepackage[utf8]{inputenc}
\usepackage[pdftex]{graphicx}           
\usepackage{float}
\usepackage{setspace}                   
\usepackage{indentfirst}                
\usepackage{makeidx}                    
\usepackage[nottoc]{tocbibind}          
\usepackage{courier}                    
\usepackage{type1cm}                    
\usepackage{listings}                   
\usepackage{titletoc}
\usepackage[fixlanguage]{babelbib}
\usepackage[font=small,format=plain,labelfont=bf,up,textfont=it,up]{caption}
\usepackage[usenames,svgnames,dvipsnames]{xcolor}
\usepackage[a4paper,top=3cm,bottom=3.0cm,left=3cm,right=3cm]{geometry} 
\usepackage[pdftex,plainpages=false,pdfpagelabels,pagebackref,colorlinks=true,citecolor=DarkGreen,linkcolor=NavyBlue,urlcolor=DarkRed,filecolor=green,bookmarksopen=true]{hyperref} 
\usepackage[all]{hypcap}                    


\usepackage[square,sort,comma,numbers]{natbib}
\usepackage[obeyDraft]{todonotes}
\usepackage{amsmath}
\usepackage{amssymb}
\usepackage{amsthm}
\usepackage{eucal}
\usepackage{mathrsfs}

\renewcommand{\phi}{\varphi}

\renewcommand{\Re}{\operatorname{Re}}
\renewcommand{\Im}{\operatorname{Im}}

\newcommand{\N}{\mathbb{N}}
\newcommand{\C}{\mathbb{C}}
\newcommand{\R}{\mathbb{R}}

\newcommand{\D}{D}

\newcommand{\Qua}{\mathcal{Q}}

\newcommand{\bs}{\backslash}

\newcommand{\bbrac}[1]{\left\{ #1 \right\}}
\newcommand{\bsbrac}[1]{\left[ #1 \right]}
\newcommand{\bpar}[1]{\left( #1 \right)}


\newcommand{\babs}[1]{\left| #1 \right|}
\newcommand{\ddint}[4]{\displaystyle\int_{#1}^{#2} \! #3 \, \operatorname{d} \! #4} 

\newcommand{\Ho}{\mathcal{O}}
\newcommand{\Hos}[1]{\Ho_{{(#1)}}}
\newcommand{\Cs}[1]{\C_{(#1)}}

    {\@beginparpenalty\predisplaypenalty
     \@endparpenalty\postdisplaypenalty
     \refstepcounter{equation}%
     \trivlist \item[]\leavevmode
       \hb@xt@\linewidth\bgroup $\m@th
         \displaystyle
         \hskip\mymathindent}%
        {$\hfil 
         \displaywidth\linewidth\hbox{\@eqnnum}%
       \egroup
     \endtrivlist}

\newcommand{\fs}{\hat}

\newcommand{\dint}{\ \!\mathrm{d}}
\newcommand{\sen}{\operatorname{sen}}
\newcommand{\Disc}{\textrm{D}}
\newcommand{\del}{\partial}

\newcommand{\gapprox	}[1]{\approx_{#1}} 
\newcommand{\tays}[1]{\mathcal{T}_{(#1)}} 
\newcommand{\B}[1]{\mathcal{B}_{#1}} 
\newcommand{\Bo}[1]{\mathcal{B}_{#1}} 
\newcommand{\fBo}[1]{\fs{\mathcal{B}}_{#1}}
\newcommand{\La}{\mathcal{L}} 
\newcommand{\fLa}{\fs{\La}} 

\newcommand{\Sw}{\mathcal{S}}
\newcommand{\F}{\mathcal{F}}

\newcommand{\ssubset}{\subset \!\!\! \subset}

\newtheorem{dfn}{Definição}
\newtheorem{exe}{Exemplo}
\newtheorem{obs}{Observação}
\newtheorem{lem}{Lema}
\newtheorem{prp}{Proposição}
\newtheorem{teo}{Teorema}
\newtheorem{cor}{Corolário}

\fontsize{60}{62}\usefont{OT1}{cmr}{m}{n}{\selectfont}

\usepackage{fancyhdr}
\pagestyle{fancy}
\fancyhf{}

\graphicspath{{./imagens/}}             
\frenchspacing                          
\urlstyle{same}                         
\makeindex                              
\raggedbottom                           
\fontsize{60}{62}\usefont{OT1}{cmr}{m}{n}{\selectfont}
\cleardoublepage
\normalsize

\begin{document}
\frontmatter 
\fancyhead[RO]{{\footnotesize\rightmark}\hspace{2em}\thepage}
\setcounter{tocdepth}{2}
\fancyhead[LE]{\thepage\hspace{2em}\footnotesize{\leftmark}}
\fancyhead[RE,LO]{}
\fancyhead[RO]{{\footnotesize\rightmark}\hspace{2em}\thepage}

\onehalfspacing  

\thispagestyle{empty}
\begin{center}
    \vspace*{2.3cm}
    \textbf{\Large{A Equação de Euler e a\\
     Análise Assintótica de Gevrey}}\\
    
    \vspace*{1.2cm}
    \Large{Max Reinhold Jahnke}
    
    \vskip 2cm
    \textsc{
    Dissertação apresentada\\[-0.25cm] 
    ao\\[-0.25cm]
    Instituto de Matemática e Estatística\\[-0.25cm]
    da\\[-0.25cm]
    Universidade de São Paulo\\[-0.25cm]
    para\\[-0.25cm]
    obtenção do título\\[-0.25cm]
    de\\[-0.25cm]
    Mestre em Ciências}
    
    \vskip 1.5cm
    Programa: Matemática Aplicada\\
    Orientador: Prof. Dr. Paulo Domingos Cordaro\\

   	\vskip 1cm
    \normalsize{Durante o desenvolvimento deste trabalho o autor recebeu auxílio financeiro da CNPq.}
    
    \vskip 0.5cm
    \normalsize{São Paulo, julho de 2013}
\end{center}

%
%
%
%

%
%

%
%
%
%

\newpage
\mbox{}
\newpage
\thispagestyle{empty}
    \begin{center}
        \vspace*{2.3 cm}
        \textbf{\Large{A Equação de Euler e a\\
     Análise Assintótica de Gevrey}}\\
        \vspace*{2 cm}
    \end{center}
    \vskip 2cm
    \begin{flushright}
	Esta versão da dissertação contém as correções e alterações sugeridas\\
	pela Comissão Julgadora durante a defesa da versão original do trabalho,\\
	realizada em 04/10/2013. Uma cópia da versão original está disponível no\\
	Instituto de Matemática e Estatística da Universidade de São Paulo.

    \vskip 2cm

    \end{flushright}
    \vskip 4.2cm

    \begin{quote}
    \noindent Comissão Julgadora:
    
    \begin{itemize}
		\item Prof. Dr. Paulo Domingos Cordaro (orientador) - IME-USP
		\item Prof. Dr. Clodoaldo Grotta Ragazzo - IME-USP
		\item Prof. Dr. Gerson Petronilho - UFSCar
    \end{itemize}
      
    \end{quote}
\pagebreak

\newpage
\chapter*{}
\null
\vfill
	\begin{flushright}
    	 \begin{CJK}{UTF8}{min}
「色は匂へど　\\散りぬるを　\\
　我が世誰ぞ　\\常ならむ　　\\
　有為の奥山　\\今日越えて　\\
　浅き夢見じ　\\酔ひもせず」\\
		 \end{CJK}
\vskip 0.2cm
Autor desconhecido.
\vskip 1cm

\textit{``Although its scent still lingers on\\
the form of a flower has scattered away\\
For whom will the glory\\
of this world remain unchanged?\\
Arriving today at the yonder side\\
of the deep mountains of evanescent existence\\
We shall never allow ourselves to drift away\\
intoxicated, in the world of shallow dreams.''}
\vskip 0.2cm
Tradução para o inglês feita pelo professor Ryuichi Abe.
    \end{flushright}

\newpage
\mbox{}
\newpage

\pagenumbering{roman}     
\thispagestyle{empty}

\chapter*{Agradecimentos}

Agradeço primeiro ao Prof. Dr. Paulo Domingos Cordaro pela orientação durante toda a escrita deste trabalho, pelo constante exemplo de profissionalismo e pela contagiante paixão pela matemática.

Aos membros da banca examinadora, o Prof. Dr. Clodoaldo Grotta Ragazzo e o Prof. Dr. Gerson Petronilho, agradeço por terem analisado cuidadosamente o texto e dado valiosas sugestões.

A Adèle Helena Ribeiro eu agradeço por ter lido cuidadosamente cada linha, tornando o texto muito mais agradável de ler, e por ter cuidado de mim e ser muito paciente durante fases difíceis do mestrado.

Deixo também um agradecimento especial aos professores Sônia Regina Leite Garcia e Manuel Valentim de Pera Garcia que, apesar de não terem participado diretamente no meu mestrado, foram importantes na minha formação como matemático.

Também sou grato aos meus amigos Luis Fernando Ragognette e Gabriel C. C. S. de Araújo por me ajudarem com a notação e estilo, e por darem diversas dicas valiosas. Também por terem pacientemente assistido às exposições deste texto. 

Agradeço ao Nicholas Braun Rodrigues por ter lido uma versão inicial da dissertação e dado valiosas sugestões para deixar o texto mais compreensível.

Não posso deixar de reconhecer a importância dos amigos que estiveram presentes durante quase toda a minha graduação e mestrado.

Agradeço ao Pedro Henrique Pontes por não esquecer de voltar para o Brasil e sempre convidar os amigos para jogar videogame e comer pizza.

Ao Bruno de Paula Jacóia e à Priscila Freitas, agradeço por terem organizado diversos encontros gastronômicos e pelos vários jogos de tabuleiro, que foram importantíssimos para eu manter minha sanidade.

Também preciso agradecer ao Lucas Ruiz dos Santos pela simpatia contagiante e pelo infinito bom humor.

Apesar de ter fugido para Alemanha e ainda não ter visitado os amigos, agradeço ao Gabriel Zanetti Nunes Fernandes por ter me dado apoio em diversas situações importantes, e por, mesmo agora morando tão longe, conseguir estar presente.

Não posso deixar de citar minha família. Meus irmãos Viktor Jahnke e Cristiane Jahnke, que apesar de serem mais novos, sempre me serviram de exemplo. Aos meus pais, Gilda Timóteo Leite e Horst Reinhold Jahnke, que sempre se preocuparam com minha educação e me deram as melhores condições que puderam.

Agradeço a todos que participaram do ZFC Fan Club e ajudaram a tornar o IME ainda mais divertido e acolhedor.

Agradeço também ao CNPq pelo financiamento, que me permite dedicação integral ao estudo da Matemática.

\chapter*{Resumo}

\noindent Jahnke, M. R. \textbf{A Equação de Euler e a Análise Assintótica de Gevrey}. 
2013. 56 f.
Dissertação (Mestrado) - Instituto de Matemática e Estatística,
Universidade de São Paulo, São Paulo, 2013.
\\


Neste trabalho, introduzimos a noção de desenvolvimento assintótico em classes de Gevrey e mostramos como o conceito clássico de convergência de séries de potências pode ser generalizado para englobar o caso em que o raio de convergência é nulo. Essa técnica pode ser útil em situações em que é necessário trabalhar com séries formais, como no estudo de Equações Diferenciais.

Caracterizamos o conjunto das funções holomorfas que admitem desenvolvimento assintótico e, em cada classe de Gevrey, definimos uma aplicação que associa uma função a uma série formal.

Determinamos sob quais condições tal aplicação é sobrejetora e sob quais ela é injetora, possibilitando a ampliação do conceito de convergência e as aplicações da teoria. 

Além disso, mostramos como essa técnica pode ser usada para obter resultados em equações diferenciais. Para isso, fazemos uma breve introdução de Equações Diferenciais com uma variável complexa e introduzimos o conceito de Polígono de Newton, ferramenta que permite obter a classe de Gevrey de uma solução formal.

Finalmente, encontramos condições para que a soma de uma solução formal de uma equação diferencial seja uma solução clássica.
\\

\noindent \textbf{Palavras-chave:} Equação de Euler, Desenvolvimento Assintótico, Classes de Gevrey.

\chapter*{Abstract}
\noindent Jahnke, M. R. \textbf{Euler Equation and Gevrey Asymptotic Analysis}. 
2013. 56 f.
Dissertação (Mestrado) - Instituto de Matemática e Estatística,
Universidade de São Paulo, São Paulo, 2010.
\\

In this work, we introduce the notion of Gevrey asymptotic expansion and we show how the classical concept of a convergent power series can be generalized to include the case in which the radius of convergence is zero. This technique can be useful in situations where it is necessary to work with formal power series, as in the study of Differential Equations.

We characterize the set of holomorphic functions which admit Gevrey asymptotic expansion and we define in each Gevrey class a map that associates to function in the class a formal series.

We determine under which conditions such a map is surjective and under which it is injective, allowing the extension of the concept of convergence and applications of the theory.

Furthermore, we show how this technique can be used to obtain results in Differential Equations. For this, we briefly recall the theory of Differential Equations in one complex variable and we introduce the concept of the Newton Polygon, a tool that allows us to find the Gevrey class of a formal solution.

Finally, we find suficient conditions for the sum of a formal solution of a differential equation to be a classical solution.

\noindent \textbf{Keywords:} Euler's Equation, Asymptotic Development, Gevrey classes.


\tableofcontents    






\mainmatter

\fancyhead[RE,LO]{\thesection}

\singlespacing              

\chapter{Introdução}
\label{cap:introducao}

\section{Breve revisão histórica}

Em diversas áreas da Matemática, como Equações Diferenciais e Combinatória, é comum assumir que a solução de um determinado problema é dada na forma de uma série de potências. Em geral, é feita uma manipulação formal da série para determinar uma expressão para os coeficientes.

Infelizmente, muitas vezes a série resultante não é convergente no sentido clássico, isto é, o raio de convergência é zero. Existem diversas técnicas para tratar esse caso. Um dos trabalhos mais antigos foi feito por Leibniz, que atribuiu à série \(\sum_{k=0}^\infty (-1)^k\) o valor \(1/2\).

Leibniz chegou a essa conclusão usando diversos argumentos, uns mais rigorosos que outros. Um deles, mais próximo dos nossos padrões de rigor, é o que descreveremos a seguir.

Para \(|x| < 1\), a função \(1/(1+x)\) pode ser representada pela série de potências
\[
	\frac{1}{1+x} = 1 - x + x^2 - x^3 + \ldots.
\]

Como o lado esquerdo da expressão acima está definido para \(x = 1\), Leibniz argumentou que pela ``lei da continuidade'' tem que valer
\[
	\sum_{k=0}^\infty (-1)^k = \frac{1}{2}.
\]

Essa abordagem foi formalizada e hoje é conhecida como ``soma de Abel''. Se \(\sum_{k=0}^\infty a_k\) é uma série tal que \(\sum_{k=0}^\infty a_kz^k\) converge em um disco unitário, então podemos dizer que essa série é Abel somável e definir sua soma como:
\[
	\sum_{k=0}^\infty a_k \doteq \lim_{x \to 1^- \atop{0 < x < 1}} \sum_{k=0}^\infty a_kx^k.
\]

Para uma grande classe de séries divergentes, esse método surge naturalmente e é bem adequado para tratar diversos problemas em Análise. Por exemplo, foi usando técnicas assim que Euler foi capaz de descobrir a função Zeta de Riemann cem anos antes de Riemann.

Porém, esse método falha para séries cujas respectivas séries de potência possuem raio de convergência zero, como a série fatorial
\[
	\sum_{k=0}^\infty (-1)^kk!.
\]

Euler, em \cite{euler1746}, chamou essas séries de \textit{séries divergentes por excelência} e atribuiu um valor à série fatorial, o que gerou muita polêmica na época.

As ideias de Euler para tratar esse tipo de problema foram geniais e deram frutos em várias áreas de pesquisa.

Uma das ideias utilizadas foi transformar o problema de somar a série em um problema de Equações Diferenciais. Euler transformou a série acima em uma série formal,
\[
	\fs{f}(z) = \sum_{k=0}^\infty (-1)^kk!z^{k+1},
\]
e observou que essa série satisfaz formalmente uma equação diferencial:
\[
 	z^2 \fs{f}' + \fs{f} = z.
\]

Hoje, a série é conhecida como \textit{Série de Euler} e a equação diferencial como \textit{Equação de Euler}.

As ideias de Euler começaram a ser melhor entendidas depois do artigo de Borel \cite{borel1899memoire}, onde foi desenvolvida uma das ferramentas essenciais para somar séries divergentes: a transformada de Borel. Esse trabalho também apresentou diversas aplicações em Equações Diferenciais, entre elas a Equação de Euler.

Essas ideias foram combinadas com a teoria de Expansão Assintótica clássica. Essa teoria foi desenvolvida por Poincaré em \cite{poincare1993memoire} para trabalhar com soluções formais de Equações Diferenciais Analíticas e relacioná-las com funções que são soluções clássicas dessas equações.

No entanto, essa teoria tem um grande problema. Dada uma série formal, não existe uma única função clássica relacionada a essa série. Para lidar com esse problema, Watson, em \cite{watson1912transformation}, e Nevanlinna, \cite{nevanlinna1921theorie}, introduziram o conceito de Expansão Assintótica de Gevrey.

Essa teoria ficou esquecida até que foi reintroduzida por Ramis, em \cite{ramis1978devissage}, que desenvolveu sistematicamente a Análise Assintótica de Gevrey com Equações Diferenciais Ordinárias no domínio complexo.

\section{Apresentação deste trabalho}

No primeiro capítulo deste trabalho, faremos uma reinterpretação da teoria de Desenvolvimento Assintótico de Gevrey apresentada por Malgrange em \cite{malgrange1995sommation} e por Balser em \cite{balser1994divergent}.

A notação foi modificada e alguns resultados foram ligeiramente adaptados visando unificar os conceitos de ambas as fontes e simplificar a aplicação em Equações Diferenciais.

Introduziremos os conceitos de séries de Gevrey, relacionamos essas séries com determinadas classes de funções holomorfas e finalmente introduzimos o conceito de Análise Assintótica de Gevrey.

As séries de Gevrey são séries formais cujos coeficientes não crescem muito rapidamente. Nesse caso, é possível estender a noção de convergência e encontrar uma soma, que é uma função holomorfa definida em um tipo especial de domínio.

Estudamos sob quais condições há uma única soma de uma dada série divergente. Para encontrar tais condições, definimos uma aplicação especial, chamada aplicação de Taylor, que relaciona séries de Gevrey a uma classe de funções.

Determinamos sob quais condições a aplicação de Taylor é injetora e sob quais condições ela é sobrejetora. Tais resultados são obtidos usando o Lema de Watson, uma das ferramentas mais importantes desta teoria e uma caracterização das séries formais somáveis, obtida no mais importante teorema deste trabalho, o Teorema \ref{caracdasoma} da página \pageref{caracdasoma}.

No segundo capítulo, apresentaremos aplicações em Equações Diferenciais Ordinárias. Antes disso, faremos uma breve introdução da teoria de Equações Diferenciais Ordinárias em uma variável complexa.

Introduziremos uma ferramenta chamada Polígono de Newton, que nos ajuda a determinar a classe de Gevrey de soluções formais de uma dada equação diferencial e assim poder aplicar a teoria desenvolvida no primeiro capítulo.

Finalmente, provaremos o teorema que nos fornece condições para que uma série formal de uma equação diferencial seja somável e que seja uma solução clássica do problema estudado.

\section{Outras aplicações}

Além de aplicações em Equações Diferenciais Ordinárias, essas técnicas de somabilidade de séries divergentes também tem sido aplicadas em Equações Diferenciais Parciais.

Um dos primeiros trabalhos nessa direção foi feito com aplicações na Equação do Calor, publicado por Lutz, D. A. and Miyake, M. and Sch{\"a}fke, R em \cite{lutz1999borel}. Esse trabalho foi estendido no artigo de Balser, W. e Loday-Richaud, M. \cite{balser2009summability}.

Aplicações em casos mais gerais foram desenvolvidas no artigo de Balser \cite{balser2004summability} e de \=Ouchi, S. 
\cite{ouchi2002multisummability}.




\chapter{Expansão assintótica de Gevrey}

\section{Motivação}

Em diversas áreas da Matemática, como Equações Diferenciais e Combinatória, é comum assumir que a solução de um determinado problema é dada na forma de uma série de potências e fazer uma manipulação formal da série para determinar uma expressão para os coeficientes.

Para exemplificar o uso de séries formais, consideremos o seguinte problema trivial:

\begin{exe}[Função exponencial] Queremos encontrar uma função holomorfa \(f\), definida em uma vizinhança de \(0\), que satisfaz o seguinte problema de Cauchy:
	\[
		\left\{
			\begin{array}{  ll}
				f' & = f; \\
				f(0) & = 1.
			\end{array}
		\right.
	\]
	
	Supondo que \(f\) pode ser escrita como série de potências
	\(
		f(z) = \sum_{k=0}^\infty a_k z^k,
	\)
	derivando formalmente e substituindo na equação diferencial, temos
	\[
		f'(z) = \sum_{k=1}^\infty ka_k z^{k-1} = f(z) = \sum_{k=0}^\infty a_k z^k,
	\]
	que nos dá a relação entre os coeficientes da série:
	\(
		ka_k = a_{k-1}, \quad k \geq 1.
	\)

	Da condição inicial, segue que \(a_0 = 1\). Portanto, \(a_1 = 1,~a_2 = 1/2, ...,~a_k = 1/k!\). Assim, temos uma expressão explícita da série de \(f\),
	\[
		f(z) = \sum_{k=0}^\infty \frac{z^k}{k!},
	\]
	que converge uniforme e absolutamente em qualquer compacto de \(\C\). Logo \(f\) é uma função holomorfa definida em \(\C\).
\end{exe}

A técnica apresentada acima funciona em diversas situações, mas infelizmente possui limitações. Considere, por exemplo, o seguinte problema de Cauchy.

\begin{exe}[Equação de Euler]
\label{des:equacao_de_euler}
Queremos encontrar uma solução \(f\), derivável numa vizinhança de \(0\), que satisfaz a seguinte equação diferencial:
	\[
		\left\{
			\begin{array}{ll}
				z^2f' + f & = z; \\
				f(0) & = 0.
			\end{array}
		\right.
	\]
	
	Supondo \(f(z) = \sum_{k=0}^\infty a_kz^k\) e procedendo de maneira análoga ao exemplo anterior, obtemos
	\[
	  \begin{aligned}
		  z^2f'(z) + f(z) & = \sum_{k=1}^\infty ka_kz^{k+1} + \sum_{k=0}^\infty a_kz^k \\
		    & = a_0 + a_1z + \sum_{k=2}^\infty \left[(k-1)a_{k-1} + a_k\right]z^k \\
		    & = z.
	  \end{aligned}
	\]

	Logo \(a_0 = 0\), \(a_1 = 1\) e, para \(k \geq 2\), \(a_k = - (k-1)a_{k-1}\). Portanto \(a_k = (-1)^{k-1}(k-1)!\) para \(k \geq 1\) e \(f\) pode ser escrita como
	\[
		f(z) = \sum_{k = 1}^\infty (-1)^{k-1}(k-1)! z^k.
	\] 
	Essa série é conhecida como Série de Euler. Observe que para \(z \neq 0\) o termo geral da série não vai a zero e portanto não é convergente em nenhum disco centrado na origem. Logo, usando o conceito clássico de convergência de séries, não é possível usar a série para definir a função \(f\) que procuramos.
\end{exe}

Queremos expandir nosso conceito de convergência de séries de potências de forma a obter uma ``soma'' das soluções formais, como a apresentada anteriormente, que pode ser usada para entender melhor o problema estudado.

Mais precisamente, dada uma série formal \(\fs{f}(z) = \sum_{k=0}^\infty a_k z^k/k!\), queremos encontrar uma função \(f\) que possui \(\fs{f}\) como ``série de Taylor''. Como a série não converge absolutamente, não conseguiremos definir a função holomorficamente em uma vizinhança da origem, então precisamos dizer em que tipo de domínio a função será definida e em qual sentido \(\fs f\) será a série de Taylor desta função.

Além disso, pode não existir uma única função \(f\) que tem \(\fs{f}\) como série de Taylor. Para ter algum controle sobre essa situação, precisamos colocar algumas restrições sobre como os termos da série \(\fs{f}\) crescem.

\section{Séries formais de Gevrey}

Antes de dizer que tipo de controle precisamos ter das séries que trabalharemos, vamos motivar nossa próxima definição com um exemplo.

\begin{exe}[Estimativa de Cauchy]
\label{des:serie_absconv} Suponha que a série \(\sum_{k=0}^\infty a_k z^k\) converge absolutamente no disco \(\overline{D}_R(0)\), com \(R > 0\). Essa série define uma função \(f\) holomorfa em \(D_R(0)\), com \(a_k = f^{k}(0)/k!\), para \(k = 0, 1, 2, ...\).

Se \(0 < r < R\) e \(z \in D_R(0)\) for tal que \(D_r(z) \subset D_R(0)\), podemos usar a Fórmula Integral de Cauchy para obter
\[
	f^{(k)}(z) = \frac{k!}{2\pi i} \int_{|\xi - z| = r} \frac{f(\xi)}{(\xi - z)^{k+1}} \dint \xi.
\]
E vale a seguinte estimativa, conhecida como Estimativa de Cauchy:
\[
|f^{(k)}(z)| \leq k! \sup_{|\xi - z| = r} |f(\xi)| \left(\frac{1}{r}\right)^k.
\]

Em particular, existem constantes \(M\) e \(C\) positivas tais que
\begin{equation}
	\label{des:estimativa_cauchy}
	|a_k| \leq MC^k, \quad k \geq 0.
\end{equation}
\end{exe} 

Queremos um controle parecido com o acima, mas permitindo que os termos \(a_k\) cresçam um pouco mais rapidamente. Isso nos motiva a seguinte definição.

\begin{dfn} Seja \(s \geq 1 \). Uma série formal \(\sum_{k=0}^\infty a_k z^k\) é dita \textbf{série de Gevrey de ordem s} se existem constantes \(M,C > 0\) tais que
\begin{equation}
\label{des:serie_gevrey}
|a_k| \leq MC^k k!^{s-1}, \quad k \geq 0.
\end{equation}
Por brevidade, muitas vezes apenas diremos que a série é \textbf{Gevrey-s}. Denotamos por \(\C_{(s)}[[z]]\) o conjunto das séries formais de Gevrey de ordem s.
\end{dfn}

Já possuímos alguns exemplos de séries de Gevrey. A série de Euler, 
que obtivemos no exemplo \eqref{des:equacao_de_euler}, é Gevrey de ordem 2. Vimos no exemplo \eqref{des:serie_absconv} que toda série absolutamente convergente é Gevrey de ordem 1.

\begin{obs}
\label{gevrey1holomorfa}
A recíproca da última afirmação também é válida, ou seja, se \(\fs{f}(z) = \sum_{k=0}^\infty a_k z^k\) é Gevrey de ordem 1, então existem constantes positivas \(M,C\) tais que vale \eqref{des:serie_gevrey}. Se \(|z| < 1/C\), então vale
\[
	\sum_{k=0}^\infty \left|a_k\right| |z|^k \leq \sum_{k=0}^\infty M(C|z|)^k \leq M\frac{1}{1 - C|z|} < \infty.	
\]

Logo a série \(\fs{f}\) é convergente no disco \(D_{1/C}(0)\). Provamos que \(\C_{(1)}[[z]] = \C\{z\}\), o conjunto das séries de potências que possuem raio de convergência positivo.
\end{obs}

Nosso próximo resultado nos indica algumas operações que podemos fazer com séries formais sem mudar a ordem das séries.

\begin{prp} O conjunto \(\C_{(s)}[[z]]\) é uma álgebra diferencial.
\end{prp}

\begin{proof} Sejam \(\fs{f}(z) = \sum_{k=0}^\infty a_kz^k\) e \(\fs{g} = \sum_{k=0}^\infty b_kz^k \in \C_{(s)}[[z]]\). Sejam \(M,C > 0\) tais que a desigualdade \eqref{des:serie_gevrey} vale para \(a_k\) e \(b_k\). É trivial provar que, se \(\lambda \in \C\), então \(\lambda \fs{f} \in \C_{(s)}[[z]]\). Também segue direto da Desigualdade Triangular que \(\fs{f} + \fs{g} \in \C_{(s)}[[z]]\).

Vamos provar que \(\fs{f}\fs{g} \in \C_{(s)}[[z]]\). Temos que \(\fs{f}\fs{g} = \sum_{k=0}^\infty \left( \sum_{l=0}^k a_l b_{k-l}\right) z^k\), portanto
	\[
		\begin{aligned}
			\left| \sum_{l=0}^k a_l b_{k-l}\right|
				& \leq \sum_{l=0}^k |a_l| |b_{k-l}| \\
				& \leq \sum_{l=0}^k MC^ll!^{s-1} MC^{k-l}(k-l)!^{s-1} \\
				& = M^2C^kk!^{s-1} \sum_{l=0}^k \binom{k}{l}^{1-s}
		\end{aligned} 
	\]
	
	Usando que as desigualdades \(\binom{k}{l}^{1-s} \leq 1\) e \(k \leq 2^k\) valem para \(k \geq 1\) e \(l \leq k\), temos que 
	\[
		\left| \sum_{l=0}^k a_l b_{k-l}\right| \leq 2M^2 (2C)^kk!^{s-1}.
	\]
	
	Logo \(\fs{f}\fs{g} \in \C_{(s)}[[z]]\). 
	
	Falta verificarmos que \(\fs{f}'(z) = \sum_{k=1}^\infty ka_kz^{k-1} = \sum_{k=0}^\infty (k+1)a_{k+1}z^k \doteq \sum_{k=0}^\infty c_kz^k\) é Gevrey-\(s\).
	Usando que \(k \leq 2^{k-1}\) e \(C^k \leq C'^{k-1}\), onde \(C' = \sup_{k \geq 2} C^{\frac{1}{k-1}}C\), temos
	\[
	\begin{aligned}
		|c_k| 
			& \leq k MC^kk!^{s-1} \\
			& \leq 2^{k-1}MC'^{k-1}(k-1)!^{s-1}(2^{k-1})^{s-1} \\
			& = M(2^{s}C')^{k-1}(k-1)!^{s-1}.
	\end{aligned}
	\]
	Como queríamos.
\end{proof}

\section{Setores do plano complexo}

Seja \(\fs{f}\) uma série de Gevrey de ordem \(s\). Como vimos, no caso \(s = 1\), \(\fs{f}\) define uma função holomorfa em uma vizinhança da origem, logo pode ser tratado com a teoria clássica de Análise Complexa. Nos focaremos no caso \(s > 1\).

Nesse caso, como a série não converge, não conseguiremos definir uma função holomorfa em uma vizinhança da origem que possui \(\fs{f}\) como série de Taylor. Mas conseguimos definir uma função holomorfa \(f\) em um conjunto aberto do plano que possui 0 como ponto de acumulação e, num sentido que ficará mais claro adiante, a função \(f\) terá \(\fs{f}\) como ``série de Taylor''. A seguir definimos em que tipo de domínio a função \(f\) será definida.

\begin{dfn}
	Um \textbf{setor (aberto)} do plano complexo é um conjunto da forma
	\[ S = \{ z \in \C: 0 < |z| < r, \eta < \arg z < \theta \} \]
	com \(-\pi \leq \eta < \theta \leq \pi\) e \(r > 0 \).
\end{dfn}
\todo{reler esse parágrafo com calma}
Dizemos que \(\alpha = \theta - \eta\) é a abertura do setor e que \(\psi = \eta + \alpha/2\) é a direção do setor \(S\). Também usamos a notação \(S(\psi, \alpha, r)\) para representar o setor \[S = \{ z \in \C: 0 < |z| < r, -\alpha/2 < \arg z - \psi < \alpha/2 \}\] e \(S(\psi, \alpha)\) para representar o setor de raio infinito.

Se \(S'\) é outro setor, dado por \(r',\eta',\theta'\), com \( 0 < r'< r \) e \( \eta < \eta' < \theta' < \theta\), dizemos que \(S'\) está \textbf{estritamente contido} em \(S\) e escrevemos \(S' \ssubset S\).

O seguinte lema é um resultado muito simples que nos possibilita construir subconjuntos de setores que facilitam a aplicação de ferramentas úteis, como a Fórmula Integral de Cauchy, o Teorema da Desigualdade do Valor Médio e a obtenção de algumas estimativas que vamos precisar.

\begin{lem}
	Dado \(S' \ssubset S\), existe \(\delta > 0\) tal que o disco \(D(z, |z|\delta)\) está estritamente contido em \(S\) para todo \(z \in S'\).
\end{lem}

\begin{proof}
Seja \(0 < t < \min\{r', 1\}\). Como \(\overline{\{z \in S': |z| = t\}}\) é compacto e está contido em \(S\), existe \(\delta > 0\) tal que, para todo \(z\) em \(S'\), com \(|z| = t,\ \overline{\Disc}(z,\delta) \subset S\). Diminuindo \(\delta\), se necessário, podemos supor que \(r'(\delta + 1) < r\).

Seja \(w \in \overline{\Disc}(z, |z|\delta)\). Vamos mostrar que \(w \in S\).

Podemos escrever \(w = z + (w - z)\) e, multiplicando ambos os lados por \(t/|z|\), obtemos
\[
	\frac{tw}{|z|} = \frac{tz}{|z|} + \frac{t(w - z)}{|z|}.
\]

Vamos primeiro ver qual é o argumento de \(w\). Como \(|\frac{t(w-z)}{z}| < t\delta\), segue que \(\frac{tw}{|z|} \in \Disc(\frac{tz}{|z|},t\delta) \subset S\). Logo, \(\eta < \arg(\frac{tw}{|z|}) < \theta \) e portanto \(\eta < \arg(w) < \theta \).

Agora vamos estimar o módulo de \(w\). Temos que \(|w| = |z + (w - z)| \leq |z| + |z|\delta = r'(1 + \delta) < r\).

Concluímos que \(w \in S\).
\end{proof}


Trabalharemos com funções cujas derivadas próximas da origem podem crescer mais rapidamente que as estimativas de Cauchy e, portanto, não são holomorfas em uma vizinhança de 0. Porém ainda precisamos de um certo controle do quão rápido as derivadas podem crescer. Elas precisam crescer obedecendo estimativas de Gevrey.

\begin{dfn} Para \(s \geq 1\), definimos \(\Hos{s}(S)\) como o subespaço de \(\Ho(S)\) formado pelas \(f\) holomorfas em \(S\) que satisfazem a seguinte propriedade: para todo \(S' \ssubset S\), existem constantes \(M,C > 0\), que podem depender de \(S'\), tais que
\begin{equation}
\label{des:funcao_gevrey}
	|f^{(k)}(z)| \leq MC^k k!^s, \quad k \geq 0,~ z \in S'.
\end{equation}

\end{dfn}

Facilmente podemos verificar que, se \(s < s'\), então \(\Hos{s}(S) \subset \Hos{s'}(S)\). A próxima proposição nos dá algumas desigualdades equivalentes àquela vista em \eqref{des:funcao_gevrey}. Essas novas estimativas simplificarão alguns cálculos que veremos no decorrer deste trabalho.

\begin{prp} A estimativa \eqref{des:funcao_gevrey} é equivalente às seguintes estimativas:
	\begin{equation}
		\label{des:funcao_gevrey1}
		|f^{(k)}(z)| \leq MC^k k^{ks}, \quad k \geq 0,~ z \in S';
	\end{equation}
	\begin{equation}
		\label{des:funcao_gevrey2}
		|f^{(k)}(z)| \leq MC^k \Gamma(1 + ks), \quad k \geq 0,~ z \in S'.
	\end{equation}
 Em cada estimativa, \(C\) e \(M\) representam constantes adequadas. 
\end{prp}

\begin{proof}  Da desigualdade \(n! \leq n^n\) segue direto que \eqref{des:funcao_gevrey} implica \eqref{des:funcao_gevrey1}. Por outro lado, como \(e^n \geq n^n/n!\), temos que \(n^n \leq e^nn!\) e segue facilmente que \eqref{des:funcao_gevrey1} implica \eqref{des:funcao_gevrey}.

Da fórmula de Stirling, sabemos que, para \(t > 0\), vale \( t^t \leq e^t\Gamma(1 + t) \) e, \(s^nn^{sn} \leq e^{sn}\Gamma(1 + ns)\). Usando essa desigualdade temos que \eqref{des:funcao_gevrey1} implica \eqref{des:funcao_gevrey2}. Finalmente, usando novamente a fórmula de Stirling, obtemos, para \(t\) grande, que \(\Gamma(1 + t) \leq t^t\). Isso é suficiente para provarmos que \eqref{des:funcao_gevrey2} implica \eqref{des:funcao_gevrey1}.
\end{proof}

Vamos prosseguir com um lema que nos permitirá definir a noção de série de Taylor, expandida a partir da origem, de funções definidas em setores. 

\begin{lem}
\label{cap:des:limorigem} Se \(f\in \Hos{s}(S)\), então, para todo setor \(S' \ssubset S\), os limites
	\begin{equation}
	\label{lem:termo_taylor}
		a_k = \lim_{z \to 0 \atop{z \in S'}} f^{(k)}(z), \quad k \geq 0,
	\end{equation}
existem e são independentes de \(S'\).
\end{lem}

\begin{proof} Para provarmos a existência do limite, vamos tomar arbitrariamente uma sequência \(\{z_n\}\) do subsetor \(S' \ssubset S\) que converge para 0 e mostrar que a sequência \(\{f(z_n)\}\) é de Cauchy.

Para cada par \(m,n \in \N\), a Desigualdade do Valor Médio nos garante que vale a seguinte estimativa: 
\[
	\begin{aligned}
		|f^{(k)}(z_n) - f^{(k)}(z_m)  | & \leq \sup_{S'} \left|f^{(k+1)}\right||z_n - z_m| \\
	 & \leq MC^{k+1}(k+1)!^s|z_n - z_m|.
	\end{aligned}
\]

Segue que a sequência \(\{f(z_n)\}\) é de Cauchy.

Para verificarmos que o limite não depende da sequência \(\{z_n\}\), tomamos uma outra sequência \(\{w_n\}\) em um outro subsetor, \(S''\), que também converge para 0 e repetimos o argumento anterior. Isso é feito usando novamente a Desigualdade do Valor Médio, mas considerando agora um subsetor suficientemente grande que contém os subsetores \(S'\) e \(S''\).
\end{proof}

\begin{obs} Combinando a proposição anterior e o lema anterior, obtemos estimativas semelhantes a \eqref{des:serie_gevrey}.
\end{obs}

Para \(s = 1\), usando o lema anterior podemos definir uma série formal \(\fs{f}(z) = \sum_{k=0}^\infty a_kz^k!/k!\) com \(a_k\) definidos como em \eqref{lem:termo_taylor}. Claramente essa série é Gevrey-1 e então, da observação \eqref{gevrey1holomorfa} temos estimativas de Cauchy e \(f\) pode ser definida holomorficamente em um disco centrado na origem. O caso que nos interessa é aquele em que \(s > 1\).

Como aqui trabalharemos com funções que não estão necessariamente definidas na origem, o nome \textit{série de Taylor} não é adequado. Vamos introduzir o conceito de desenvolvimento assintótico.

\begin{dfn} Seja \(f\) uma função holomorfa em \(S\). Dizemos que a série formal
\[
	\fs{f}(z) = \sum_{k=0}^\infty \frac{a_k}{k!} z^k \in \C[[z]]
\]
é uma \textbf{expansão assintótica de ordem \(\boldsymbol{s} \geq 1\)} da função \(f\) em \(S\) se, para todo \(S' \ssubset S\), existem constantes \(M,C > 0\) tais que
\begin{equation}
\label{des:expansao_assintotica_gevrey}	
	\babs{ f(z) - \sum_{k=0}^{n-1} \frac{a_k}{k!} z^k } \leq MC^{n} n!^{s-1} |z|^n, \quad n \geq 0,~ z \in S'.
\end{equation}

Por brevidade, muitas vezes usaremos a seguinte notação
\[
	f \gapprox{s} \fs{f} \quad \text{em} \quad S.
\]
\end{dfn}

A próxima proposição nos dá uma caracterização muito útil do conjunto \(\Hos{s}(S)\) que acabamos definir.

\begin{prp}
\label{prp:caracterizacao_de_Os} Seja \(f\) uma função holomorfa em \(S\). A função \(f\) pertence a \(\Hos{s}(S)\) se, e somente se, existe uma sequência de números complexos \(\{a_k\}\) tal que a série \(\fs{f}(z) = \sum_{k=0}^\infty a_k z^k/k!\) é uma expansão assintótica de ordem \(s\) de \(f\) em \(S\).
\end{prp}

\begin{proof} \((\implies)\)
Sejam \(f\) uma função satisfazendo \eqref{des:funcao_gevrey} e \(S' \ssubset S\).

Para cada \(k \in \N\), seja \(a_k\) definido como em \eqref{lem:termo_taylor}. Usando a fórmula de Taylor com \(z,z_0 \in S'\), temos:
\[
\begin{aligned}
\left|f(z) - \sum_{k=0}^{n-1} \frac{f^{(k)}(z_0)}{k!}(z - z_0)^k\right| 
	& \leq \left|\int_0^1 \frac{(1-t)^{n-1}}{(n-1)!} f^{(n)}(z_0 + t(z - z_0))(z - z_0)^{n} \dint t \right| \\
	& \leq \frac{|z - z_0|^n}{(n-1)!} \int_0^1 \left|f^{(n)}(z_0 + t(z - z_0))\right| \dint t \\
  & \leq \frac{MC^n n!^s}{(n-1)!}|z - z_0|^n. \\
\end{aligned}
\]


O resultado segue tomando o limite \(z_0 \to 0\) em \(S'\).

\((\impliedby)\)  Sejam \(S',S'' \ssubset S\) setores satisfazendo \(S' \ssubset S'' \ssubset S\) e \(\delta > 0\) tal que, para todo \(z \in S'\), \(\Disc(z, \delta|z|) \subset S''\). Aplicando a Fórmula de Cauchy, temos:

\[
\begin{aligned}
f^{(n)}(z) & = \frac{n!}{2\pi i}\int_{|z - \xi| = \delta|z|} \frac{f(\xi)}{(\xi - z)^{n+1}} \dint \xi \\
& = \frac{n!}{2\pi i}\int_{|z - \xi| = \delta|z|} \frac{f(\xi) - \sum_{k=0}^{n-1} a_k \xi^k/k!}{(\xi - z)^{n+1}} \dint \xi.
\end{aligned}
\]

Observemos que a última igualdade segue de:
\[
	\int_{|z - \xi| = \delta|z|} \frac{\sum_{k=0}^{n-1} a_k \xi^k/k!}{(\xi - z)^{n+1}} \dint \xi = 0.
\]

Sejam \(\sum_{k=0}^\infty a_k z^k/k!\) uma expansão assintótica de ordem s de \(f\) e \(M,C\) constantes positivas tais que \eqref{des:expansao_assintotica_gevrey} vale com \(S''\) no lugar de \(S'\). Temos:

\begin{equation}
\label{des:derivada_gevrey}
  \begin{aligned}
	  \babs{ f^{(n)}(z) }
		  & \leq \frac{n!}{(\delta|z|)^n} \sup_{|\xi - z| = \delta|z|} \babs{ f(\xi) - \sum_{k=0}^{n-1} \frac{a_k}{k!} \xi^k } \\
		  & \leq \frac{n!}{(\delta|z|)^n} \sup_{|\xi - z| = \delta|z|} MC^nn!^{s-1}|\xi|^n \\
		  & \leq MC^n(1 + 1/\delta)^nn!^s.
  \end{aligned}
\end{equation}
\end{proof}

A proposição anterior nos dá uma espécie de dicionário, relacionando funções holomorfas com séries que não são necessariamente convergentes.

Usando o lema \eqref{cap:des:limorigem}, podemos definir a seguinte aplicação.

\begin{dfn}
\label{cap-dev:aplicacao_de_taylor} Definimos a \textbf{aplicação de Taylor de classe \(\boldsymbol{s}\)}
	\[
	\label{def:tays}
		\tays{s}: \Hos{s}(S) \to \Cs{s}[[z]],
	\]
por
	\[
		\tays{s}(f) = \sum_{k=0}^\infty a_kz^k/k!
	\]
com \(a_k\) dados por \eqref{lem:termo_taylor}.
\end{dfn}

Para aplicações, é importante que saibamos se operações como a soma, o produto de duas funções e a derivação não muda o espaço em que as funções pertencem. Tais resultados são garantidos pela proposição a seguir.

\begin{prp}
\label{hosehumaalgebra} O conjunto \(\Hos{s}(S)\) munido da soma e do produto usuais é uma álgebra sobre \(\C\) fechada para a derivação.
\end{prp}

\begin{proof} Sejam \(f,g \in \Hos{s}(S)\) 
e \(S' \ssubset S\) um subsetor qualquer. Sejam também \(M,C > 0\) constantes tais que \eqref{des:expansao_assintotica_gevrey} vale para \(f\) e \(g\).

Segue da Desigualdade Triangular que \(f+g \in \Hos{s}(S)\). Vamos mostrar que \(fg \in \Hos{s}(S)\). Da regra de Leibniz, temos:
\[
\begin{aligned}
	|(fg)^{(n)}(z)| 
	& \leq \sum_{k=0}^n \binom{n}{k} MC^{n-k}(n-k)!^s MC^{k}k!^s \\
	& = M^2C^n\sum_{k=0}^n \binom{n}{k} [(n-k)!k!]^s \\
	& = M^2C^n n!^s \sum_{k=0}^n \binom{n}{k}^{1-s}. \\
\end{aligned}
\]

O resultado segue usando que \(\binom{n}{k}^{1-s} \leq 1\) e que \(n \leq 2^n\).

Resta-nos provar que \(f' \in \Hos{s}(S)\).
	\[
		|(f')^n(z)| = |f^{n+1}(z)| \leq MC^{n+1}(n+1)!^s \leq MC^{n+1}(2^s)^{n+1}n!^s, \quad n \geq 0,~ z \in S'.
	\]
 Logo \(f' \in \Hos{s}{S}.\) Como queríamos.

%

\end{proof}

Agora que estabelecemos que \(\Hos{s}(S)\) é uma álgebra fechada para as derivações, podemos enunciar a seguinte proposição que será importante para aplicações.

\begin{prp}
\label{homo} A aplicação de Taylor \[\tays{s}: \Hos{s}(S) \to \Cs{s}[[z]]\] é um homomorfismo de álgebras que comuta com as derivações.
\end{prp}

A demonstração é simples e segue direto da definição.

\section{O Lema de Watson}

Vamos estudar a aplicação \(\tays{s}\) que definimos anteriormente. Veremos, mais especificamente, sob quais situações essa aplicação é injetora e sob quais ela é sobrejetora.

Encontrando condições necessárias para a injetividade da aplicação de Taylor, temos também condições necessárias para que exista uma única soma da série. Por outro lado, ao determinarmos condições para a não injetividade da aplicação, temos condições suficientes para que a série não possua uma única soma.

Começaremos estudando a injetividade. Para isso, precisamos usar um resultado importante e útil no estudo de Análise Assintótica: o Lema de Watson. Com ele, conseguiremos encontrar condições necessárias para a aplicação \(\tays{s}\) ser injetora.

\begin{lem}[de Watson] Sejam \(\Omega = \{ z \in \C; \Re z > c\}\) e \(f\) uma função holomorfa em \(\Omega\). Se existem constantes positivas \(A\) e \(B\) tais que, para todo \(z\) em \(\Omega\), vale que \(|f(z)| \leq Ae^{-B|z|}\), então \(f\) é identicamente nula.
\end{lem}

\begin{proof} Como translações não mudam o decaimento exponencial da função \(f\), podemos supor que \(c < 0\). Seja \(I\) o eixo imaginário de \(\C\).

Para cada \(t \in \R\), definimos

\[
	g(t) \doteq \int_I f(z)e^{zt}\dint z = i\int_{-\infty}^\infty f(ix)e^{ixt}dx.
\]

Vamos provar que \(g\) é nula em \(\R\) e usando Análise de Fourier, vamos concluir que a função \(f\) é ser nula em \(I\).

Para cada \(\theta \in [-\pi/2, \pi/2]\), considere a curva
\[
	\gamma_\theta(x) \doteq xe^{i\theta}, \quad x \in [0,\infty).
\]

Podemos definir, para \(t \in \R\) com \(\Re(te^{i\theta}) = t\cos(\theta) < B\), a função \(h_\theta\) dada pela seguinte integral:

\[
	h_\theta (t) \doteq \int_{\gamma_\theta} f(z)e^{zt}\dint z = \int_0^\infty f(xe^{i\theta})e^{txe^{i\theta}}e^{i\theta} \dint x.
\]

É importante notarmos que, se \(w \in \gamma_\theta\), então \(|f(w)e^{wt}| \leq Ae^{-Bx}e^{x\Re(te^{i\theta})}\). Assim a condição \(\Re(te^{i\theta}) < B\) garante que a integral converge e a função \(h_\theta\) está bem definida.

Vamos provar que, para quaisquer \(\theta \in (0,\pi/2)\) e \(t < \frac{B}{\cos \theta}\), temos \(h_\theta(t) = h_{-\theta} (t)\).

Como \(f(z)e^{zt}\) é holomorfa em \(\Omega\), vale que
\[
0 = - \int_{\gamma_{\theta,r}} f(z)e^{zt}\dint z + \int_{\psi_{\theta,r}} f(z)e^{zt}\dint z + \int_{\gamma_{-\theta,r}} f(z)e^{zt}dz,
\]

onde \(\gamma_{\theta,r}\) é a curva \(\gamma_{\theta}\) restrita ao intervalo \([0,r]\) e \(\psi_{\theta,r}\) é o segmento de reta que começa em \(\gamma_{-\theta}(r)\) e termina em \(\gamma_{\theta}(r)\). Mais precisamente, para \(x \in [0,1]\),

\[
	\psi_{\theta,r}(x) = re^{-i\theta} + x[re^{i\theta} - re^{-i\theta}] = r[\cos\theta + i\sen\theta(2x -1)].
\]

Como
\[
	h_\theta(t) = \lim_{r \to \infty} \int_{\gamma_{\theta,r}} f(z)e^{zt}\dint z,
\]

nos resta mostrar que
\[
	\lim_{r \to \infty} \int_{\psi_{\theta,r}} f(z)e^{zt}\dint z = 0.
\]

Valem as seguintes relações:
\begin{itemize}
	\item \(\Re(\psi_{\theta,r}(x)) = r\cos(\theta);\)
	\item \(| \psi_{\theta,r}(x)  |^2 = r^2[\cos^2\theta + \sen^2\theta(2x - 1)^2] \leq r^2;\) e
	\item \(\psi'_{\theta,r}(x) = i2r\sen(\theta)\).
\end{itemize}

Portanto
\[
\begin{aligned}
\left|\int_{\psi_{\theta,r}} f(z)e^{zt}dz \right|
	 = & \left| \int_0^1 f(\psi_{\theta,r}(x))e^{t\psi_{\theta,r}(x)}\psi'_{\theta,r}(x) \dint x  \right| \\
\leq & \int_0^1 Ae^{-B|\psi_{\theta,r}(x)|} e^{t\Re(\psi_{\theta,r}(x))}|\psi'_{\theta,r}(x)| \dint x \\
\leq & \int_0^1 Ae^{-Br} e^{tr\cos\theta} 2r\dint x \\
   = & Ae^{r(t\cos\theta -B)}2r.
\end{aligned}
\]

Como \(t\cos\theta -B < 0 \), temos que \(\lim_{r \to \infty} Ae^{r(t\cos\theta -B)}2r = 0\) e segue que \(h_\theta(t) = h_{-\theta}(t)\).

Para concluirmos que \(g\) é nula, basta mostrarmos que \(g(t) = h_{\pi/2}(t) - h_{-\pi/2}(t)\). De fato, sejam \(t\) fixado e \(\theta\) suficientemente próximo de \(\pi/2\) tal que \(t\cos\theta < B/2\) . Escrevendo \(w = xe^{i\theta}\), vale que \(|w| = x\) e que

\[
 |f(w)e^{wt}| \leq Ae^{-B|w|}e^{t|w|\cos\theta} = Ae^{x(t\cos\theta -B)} \leq Ae^{-xB/2}.
\]

Do Teorema da Convergência Dominada de Lebesgue, obtemos:
\[
\begin{aligned}
	\lim_{\theta \to \pi/2} h_\theta(t) & = \lim_{\theta \to \pi/2} \int_0^\infty f(xe^{i\theta})e^{txe^{i\theta}}e^{i\theta} \dint x \\
	& = \int_0^\infty f(ix)e^{tix}i \dint x.
\end{aligned}
\]

Igualmente calculamos:
\[
\begin{aligned}
	\lim_{\theta \to -\pi/2} h_\theta(t) & = \int_0^\infty f(xe^{i\theta})e^{txe^{i\theta}}e^{i\theta} \dint x \\
	& = \int_0^\infty -f(-ix)e^{-tix}i \dint x \\
	& = -\int_{-\infty}^0 f(ix)e^{tix}i \dint x.
\end{aligned}
\]

Como \(h_\theta(t) - h_{-\theta}(t) = 0\), chegamos à seguinte conclusão:
\[
\begin{aligned}
	0 & = \lim_{\theta \to \pi/2} h_\theta(t) - h_{-\theta}(t) \\
	& = i\int_0^\infty f(ix)e^{tix}\dint x + i\int_{-\infty}^0 f(ix)e^{tix}\dint x \\
	& = g(t).
\end{aligned}
\]

Agora vamos verificar que a função \(x \longmapsto f(ix)\) está em \(\Sw(\R)\). De fato, dados \(n,m \in \N\), temos que \(\frac{\dint^n}{\dint x^n} f(ix) = f^{(n)}(ix)i^n\). Com \(\varphi(t) = re^{it}\), para \(t \in [0,2\pi]\) e \(r = |c|/2\), segue da Fórmula Integral de Cauchy:
\[
\begin{aligned}
|f^{(n)}(ix)| & = \left| \frac{n!}{2\pi} \int_{\varphi} \frac{f(z)}{(z - ix)^{n+1}} \dint z\right| \\
  & \leq \frac{n!}{2\pi r^n} \int_0^{2\pi} |f(ix + re^{ti})| \dint t \\
 & \leq \frac{n!}{2\pi r^n} \int_0^{2\pi} Ae^{-B|ix + re^{ti}|} \dint t \\
 &  \leq \frac{n!}{2\pi r^n} \int_0^{2\pi} Ae^{-B||x| - r  |} \dint t \\
 & = \frac{n!Ae^{-B||x| - r|}}{r^n}.
\end{aligned}
\]

Vemos facilmente que:
\[
	\sup_{x \in \R} \left|x^m\frac{\dint^n}{\dint x^n} f(ix) \right|
  \leq \sup_{x \in \R} |x|^m\frac{n!Ae^{-B||x| - r|}}{r^n} < \infty.
\]

Finalmente \(g = 0\) implica que \(\frac{1}{2\pi} \int_{-\infty}^\infty f(ix)e^{ixt}\dint x = 0\). Como a transformada de Fourier \(\F:\Sw \to \Sw\) é uma bijeção, temos que \(f \equiv 0\) em \(I\). Como \(f\) é holomorfa, segue que \(f \equiv 0\) em \(\Omega\).
\end{proof}

Na literatura, é possível encontrar outras versões do Lema de Watson. Uma delas, encontrada em \cite{giuseppe2013onthevanishing}, é uma versão mais fraca, mas é uma bela aplicação do Princípio do Máximo.

\begin{lem}
\label{outrolemawatson} Seja \(S \subset \C\) um semiplano e \(f\) uma função holomorfa em \(S \cap U\), onde \(U\) é uma vizinhança aberta da origem. Suponhamos \(f\) se estende continuamente até a borda de \(S\) perto da origem e que para qualquer \(\lambda \geq \lambda_0\) vale:
\[
	|f(z)|e^{\lambda/|z|} \to 0 \quad \text{quando} \quad z \to 0,~ z \in S. 
\]

Nessas condições, \(f \equiv 0\).
\end{lem}

Esse lema é uma consequência direta do Lema de Watson. De fato, como \(|f(z)|e^{\lambda_0/|z|} \to 0\) quando \(z \to 0\) em \(z \in S\), existe uma constante \(C > 0\) tal que \(|f(z)| \leq Ce^{-\lambda_0/|z|}\). Portando, se definirmos \(g(z) = f(1/z)\) para \(z^{-1} \in S \cap U\), então a função \(g\) está nas condições do Lema de Watson e segue que \(g \equiv 0\). Logo \(f \equiv 0\).

\begin{proof} Como rotações e translações não alteram o decrescimento exponencial, podemos assumir que \(S\) é o semiplano \(\{z \in \C: \Re z > 0\}\). Sem perda de generalidade, também podemos assumir que \(D = D_1(0) \subset U\).

Para \(\lambda \geq \lambda_0\), definimos \(g_\lambda(z) = f(z)e^{\lambda/z}\). Então \(g_\lambda\) é holomorfa e contínua até a borda de \(S\). De fato, a afirmação pode ser verificada diretamente para \(z \in \del S\) tais que \(\Im z \neq 0\). Além disso, por hipótese temos
\[
	\babs{ f(z)e^{\lambda/z} } = \babs{ f(z) }e^{\Re \lambda/z} \leq \babs{ f(z) }e^{\lambda/|z|} \to 0, \quad \text{ quando } z \to 0,
\]
então basta definir \(g_\lambda(0) = 0\).

Definimos \(M \doteq \max_{z \in \overline{S\cap D}} |f(z)|\). Como \(g_\lambda\) é holomorfa e contínua até a fronteira de \(S \cap D\), podemos aplicar o princípio do máximo e obter
\begin{equation}
\label{petronilho}
	\max_{z \in \overline{S\cap D}} g_\lambda(z) = \max_{z \in \del(S\cap D)} |g_\lambda(z)| \leq Me^\lambda,
\end{equation}

pois \(\babs{ e^{\lambda/z} } = 1\) para \(z \in \del S\) e \(\babs{e^{\lambda/z}} \leq e^{\lambda/|z|} = e^\lambda\) para \(z \in \del D\).

Se tomarmos, \(r \in \R\) com \(0 < r < 1/2\), temos
\[
	|g_\lambda(r)| = |f(r)|e^{\Re \lambda/r} \geq |f(r)|e^{2\lambda},
\]
que combinado com \eqref{petronilho}, resulta em  \(|f(r)|e^\lambda \leq M\). Como \(\lambda\) pode ser tomado arbitrariamente grande, temos que \(f(r) = 0\) e assim \(f \equiv 0\).
\end{proof}

\section{Estudo da Aplicação de Taylor}

Usando o Lema de Watson conseguiremos provar a injetividade de \(\tays{s}\) em setores suficientemente grandes. Mas antes de provar a injetividade, vamos primeiro entender melhor o núcleo do operador \(\tays{s}\).

\begin{prp} Sejam \(s > 1\) e \(f \in \Hos{s}(S)\). Temos \(f \in \ker \tays{s}\) se, e só se, para todo setor \(S' \ssubset S\), existem constantes \(B, b > 0\) tais que
	\begin{equation}
		\label{des:ker_tays}
		|f(z)| \leq Be^{ -b|z|^{-\frac{1}{s - 1}} }, \quad z \in S'.
	\end{equation}
\end{prp}

\begin{proof} (\(\implies\)) Se \(f \in \ker \tays{s}\), os coeficientes da expansão assintótica de \(f\) em \(S\) são todos nulos. Da proposição \eqref{prp:caracterizacao_de_Os} segue que, para todo \(S' \ssubset S\), existem constantes positivas \(M\) e \(C\) tais que
\[
	\babs{ f(z) } \leq MC^{n} n!^{s-1} |z|^n, \quad n \geq 0,~ z \in S'.
\]
logo
\[
\babs{ f(z) } \leq M \min_{n \geq 0} \{C^{n} n^{n(s-1)} |z|^n\}, \quad z \in S'.
\]
Usando Cálculo Diferencial, vemos facilmente que o mínimo da função \(\lambda(t) = (C|z|)^t t^{t(s-1)}\), para \(t > 0\), é atingido em \(t_0 = (C|z|)^{-\frac{1}{s - 1}}/e\). Consequentemente, escrevendo \(t_0^* = [t_0]\) ou \(t_0^* = [t_0] + 1\), temos que
\[
	\min_{n \geq 0} \{C^{n} n^{n(s-1)} |z|^n\} = \lambda(t_0^*) = e^{t_0^* \log\bpar{ C|z|(t_0^*)^{s-1} }}.
\]

Como \(t_0^* \leq t_0 + 1\) e 
, na região 
\[
	S'' = \left\{z \in S': |z| \leq \frac{1}{e^{s-1}C} \right\}
\]
temos a estimativa \(t_0^* \leq 2(C|z|)^{-\frac{1}{s - 1}}/e\).

Portanto, se \(z \in S''\),
\[
	\log\bpar{ C|z|(t_0^*)^{s-1} } \leq \log \bpar{ 2^{s-1}/e^{s-1} } \doteq -a < 0.
\]

Assim
\( \babs{ f(z) } \leq Me^{-at^*_0} \). Como \(t^*_0 \geq t_0 - 1\), obtemos \( \babs{ f(z) } \leq Me^ae^{-at_0} = M'e^{-at_0}\). Finalmente
\[
	\babs{ f(z) } = M'e^{ -a(C|z|)^{-\frac{1}{s - 1}}/e }.
\]

 (\(\impliedby\)) Sejam \(S' \ssubset S'' \ssubset S\) e \(B,b > 0\) constantes tais que vale \eqref{des:ker_tays}. Se \(\delta > 0\) é tal que \(D_{\delta|z|}(z) \subset S''\), para todo \(z \in S'\), então podemos aplicar a Fórmula Integral de Cauchy:
\[
	\begin{aligned}
		|f^{(k)}(z)|
			& \leq \frac{k!}{(\delta|z|)^k} \sup_{|\xi - z| = \delta|z|} \babs { f(\xi) } \\
			& \leq \frac{k!}{(\delta|z|)^k} \sup_{|\xi - z| = \delta|z|} Be^{ -b|\xi|^{-\frac{1}{s-1}} } \\ 
			& \leq \frac{k!}{(\delta|z|)^k} Be^{ -b[|z|(1-\delta)]^{-\frac{1}{s-1}} }  .
	\end{aligned}
\]

Da estimativa acima, vemos que \(f^{(k)}(z) \to 0\) quando \(z \to 0\) em \(S'\). Portanto, do lema \eqref{cap:des:limorigem}, segue que \(f \in \ker \tays{s}\).
\end{proof}

\begin{prp} Se \(1 < s < 3\) e se o setor \(S\) tem abertura maior que \((s - 1)\pi\), então a aplicação \(\tays{s}\) é injetora.
\end{prp}

\begin{proof} Sejam \(f \in \ker \tays{s}\) e \(S' \ssubset S\) com abertura maior que \((s - 1)\pi\). Podemos escrever \(S' = \{ z \in \C: \alpha' < \arg z < \beta', 0 < |z| < R' \}\) com \(\beta'- \alpha' > (s - 1)\pi\). Definindo
\[
	\Omega \doteq \{w \in C: |w| > 1/R'^{\frac{1}{s-1}}, \beta'/(1 - s) < \arg w < \alpha'/(1 - s)\}
\]
temos que \(\alpha'/(1 - s) - \beta'/(1 - s) > (\beta' - \alpha')/(s - 1) > \pi\). Logo \(\Omega\) é simplesmente conexo e contém um semiplano. Isso nos permite definir holomorficamente a aplicação \(w \in \Omega \mapsto w^{1 - s} \in S'\). Da proposição anterior vale a estimativa
\[ \babs{ f(w) } \leq Be^{-b|w|}, \quad w \in S'. \]

Logo podemos aplicar o Lema de Watson à aplicação \(w \in S \mapsto f(w^{1-s})\) e concluir que \(f\) se anula identicamente.
\end{proof}

\subsection{Transformadas de Borel e Laplace}

Vamos definir duas transformadas: a transformada de Borel e a transformada de Laplace. Elas serão usadas não apenas para determinar se uma série formal é somável, mas também para encontrar uma fórmula da ``soma'' da série.

Começaremos demonstrando um lema técnico:

\begin{lem}
\label{lem:formula_nao_de_hankel} Seja \(w \in \C\) com \(\Re w > 0\). Para todo natural \(n\) e \(s > 0\), vale a seguinte igualdade:
\[
	\frac{1}{w^{ns}} = \frac{1}{\Gamma(ns)} \int_0^\infty t^{ns -1} e^{-tw} \dint t.
\]
\end{lem}

\begin{proof}
Para \(w \in \C\), denotaremos por \(w \R_+\) o conjunto
\[\{z \in \C: z = wx,~x \in \R_+\}.\]

Inicialmente notemos que
\[
\begin{aligned}
	\int_0^\infty t^{ns -1} e^{-tw} \dint t & = w^{-ns}\int_0^\infty (wt)^{ns -1} e^{-tw}w \dint t \\
	& = w^{-ns} \int_{w\R_+} z^{ns -1} e^{-z} \dint z.
\end{aligned}
\]

Basta mostrarmos que
\[
	\int_{w \R_+} z^{ns -1} e^{-z}  \dint z = \int_{\R_+} z^{ns -1} e^{-z}  \dint z.
\]

Sejam \(\alpha_R(t) = wt\) e \(\beta_R(t) = t\) definidas para \(t \in [1/R,R]\), com \(R > 1\) e \(\gamma_R(t) = Re^{it}\), para \(t \in [0,\arg(w)]\). Do Teorema de Cauchy, vale
\[
	\int_{\beta_R + \gamma_R - \alpha_R - \gamma_{1/R}} z^{ns -1} e^{-z}  \dint z = 0.
\]

Do Teorema da Convergência Dominada, obtemos
\[
	\int_{\gamma_R} z^{ns -1} e^{-z}  \dint z = \int_0^{\arg{w}} (Re^{it})^{ns -1} e^{-R e^{it}} iRe^{it}  \dint t \xrightarrow[R \to \infty ]{} 0
\]
e
\[
	\int_{\gamma_{1/R}} z^{ns -1} e^{-z}  \dint z = \int_0^{\arg{w}} (e^{it}/R)^{ns -1} e^{-e^{it}/R} ie^{it}/R  \dint t \xrightarrow[R \to \infty ]{} 0.
\]

Logo \[
	\lim_{R \to \infty} \int_{\beta_R} z^{ns -1} e^{-z} \dint z = \lim_{R \to \infty} \int_{\alpha_R} z^{ns -1} e^{-z}  \dint z
\]
e concluímos a demonstração.
\end{proof}


 

No exemplo a seguir vamos apresentar uma transformada (formal) de Borel e ver como ela, em combinação com a transformada de Laplace, pode ser usada para somar a série de Euler.

\begin{exe}
\label{exe:soma_de_euler} A série de Euler \(\fs{f} (z)= \sum_{k = 1}^\infty (-1)^{k-1}(k-1)! z^k\) é de classe de Gevrey-2. Portanto, se dividirmos cada coeficiente desta série por \((k-1)!\), obtemos uma série formal \(\hat{g}\), que é Gevrey-1, isto é, uma função holomorfa 
\begin{equation}
\label{naoboreltransform}
	\hat{g}(z) = \sum_{k = 1}^\infty (-1)^{k-1} z^k = \frac{z}{1 + z},
\end{equation}
que está definida para todo \(z \not= -1\). À série formal \(\hat{g}\), damos o nome de transformada formal de Borel de \(\fs{f}\).

Para \(\Re z > 0\), definimos a função
\begin{equation}
\label{soma}
  f(z) = \ddint{0}{\infty}{ \frac{ \hat{g}(t) }{t} e^{-t/z} }{t} = \ddint{0}{\infty} { \frac{ 1 }{1 + t} e^{-t/z} }{t}.
\end{equation}

Esta função \(f\) admite \(\fs f\) como expansão assintótica de ordem 2 em \(S(0,\pi,r)\), para qualquer \(r > 0\).

De fato, derivando sob o sinal de integração, vemos que \(f\) é holomorfa em \(S\). Além disso, como
\[
  \frac{1}{1+t} = \sum_{k=1}^n (-1)^{k-1}t^{k-1} + (-1)^n\frac{t^n}{1 + t},
\]
vale que
\[
  \ddint{0}{\infty} { \frac{ 1 }{1 + t} e^{-t/z} }{t} = \sum_{k=1}^{n-1} (-1)^{k-1}\ddint{0}{\infty}{ t^{k-1} e^{-t/z}} {t} + (-1)^n \ddint{0}{\infty}{ \frac{t^n}{1+t} e^{-t/z} } {t}.
\]

Da lema \eqref{lem:formula_nao_de_hankel}, temos
\[
  \ddint{0}{\infty}{ t^{k-1} e^{-t/z}} {t} = (k-1)!z^{k}.
\]

Assim, para cada natural \(n \geq 2\), vale
\[
  \ddint{0}{\infty} { \frac{ 1 }{1 + t} e^{-t/z} }{t} - \sum_{k=1}^{n-1} (-1)(k-1)!z^{k} = (-1)^n \ddint{0}{\infty}{ \frac{t^n}{1+t} e^{-t/z} } {t}.
\]

Dado um subsetor \(S' \ssubset S\), tomemos \(\delta > 0\) tal que \(S'' \subset \{z \in \C: |\arg z| < \pi/2 - \delta\}\). Para \(z \in S''\), vale que \(\babs{ e^{t/z} } \leq e^{t\sen\delta/|z|}\) e segue a estimativa
\[
	\begin{aligned}
  \babs{ \ddint{0}{\infty}{ \frac{t^n}{1+t} e^{-t/z} } {t} } & \leq \ddint{0}{\infty}{ \frac{t^n}{1+t} e^{-t\sen\delta/|z|} } {t} \\ & \leq \ddint{0}{\infty}{ t^n e^{-t\sen\delta/|z|} } {t} \\ & =  \frac{n! |z|^{n+1}}{(\sen\delta)^{n+1} },
  \end{aligned}
\] com a última igualdade obtida usando novamente o lema \eqref{lem:formula_nao_de_hankel}. Como queríamos. 

\end{exe}

Como curiosidade, podemos encontrar no artigo de V.S. Varadarajan \cite{varadarajan2007euler}, onde é exposto um pouco sobre o trabalho de Euler em séries infinitas, algumas ideias de como Euler chegou à atribuição \[
	\sum_{k=0}^\infty (-1)^k k! = 0,59637255\ldots.
\]

Uma pergunta natural a se fazer é: qual é o valor da função definida em \eqref{soma} em \(1\)? Usando integração numérica, obtemos
\[
	\ddint{0}{\infty} { \frac{ 1 }{1 + t} e^{-t} }{t} = 0,59637255\ldots,
\]
que é um resultado impressionante dado que naquela época não havia nem uma definição precisa do conceito de convergência.

Com base no exemplo acima, queremos usar as transformada de Laplace e de Borel para estudar séries divergentes. Porém, serão necessárias algumas adaptações. 

\begin{obs}
Na literatura, a série formal obtida em \eqref{naoboreltransform} é conhecida como transformada de Borel formal, porém não usaremos essa definição. Precisamos fazer modificações na transformada de Borel e na transformada de Laplace de acordo com cada ordem de Gevrey.

Além disso, a transformada de Laplace usual possui a característica de diminuir a ordem de polinômios, o que pode tornar as demonstrações um pouco mais complicadas. Portanto, o que faremos a seguir é apresentar as transformadas de Borel e transformada de Laplace adaptadas para as nossas necessidades.
\end{obs}

A seguir, vamos definir uma classe de funções que serão importantes por dois motivos: o primeiro é que a transformada de Laplace está bem definida nessa classe de funções; o segundo é que, com essas funções, conseguiremos dar uma condição necessária e suficiente para uma série formal ser somável.

\begin{dfn} Sejam \(S\) um setor de raio infinito e \(f\) uma função holomorfa em \(S \bs D_\rho\), para algum \(\rho \geq 0\). Dizemos que \(f\) tem crescimento exponencial de ordem \(m > 0\) em \(S\) se, para todo subsetor \(S' \ssubset S\), existem constantes positivas \(r,b,B\), com \(r > \rho\), tais que
\[
	|f(z)| \leq Be^{ b|z|^m }, \quad z \in S', \quad |z| > r.
\]
\end{dfn}



Agora estamos prontos para definir uma família de transformadas de Laplace. Será importante definir uma transformada que será adequada para trabalhar com cada classe de Gevrey.

\begin{lem}
\label{lem:transformada_de_laplace} Sejam \(S = S(\theta, \alpha)\) e \(f\) holomorfa e com crescimento exponencial de ordem \(m\) em \(S\). Para \(\eta\) satisfazendo \(|\theta - \eta| < \alpha\), a integral
\begin{equation}
\label{eq:temporaria1}
	\int_0^{\infty e^{i\eta}} f(u)e^{ -(u/z)^m } mu^{m-1} \dint u,
\end{equation}
com a integração sendo feita na semirreta \(\arg u = \eta\), converge absolutamente no aberto dado pela desigualdade
\[
	0 < b|z|^m < \cos[m(\eta - \arg z)],
\]
onde \(b > 0\) é uma constante que depende de \(f\) e \(\eta\). Nessa região, a função
\begin{equation}
\label{eq:transformada_de_laplace}
	g(z) \doteq \frac{m}{z^m}\int_0^{\infty e^{i\eta}} f(u) e^{ -(u/z)^m } u^{m-1}\dint u
\end{equation}
é holomorfa.
\end{lem}

\begin{proof} Primeiro vamos provar que a integral \eqref{eq:temporaria1} converge absolutamente.

Como \(|\theta - \eta| < \alpha/2\) e \(f\) tem crescimento exponencial de ordem \(m\), existem constantes positivas \(r,b\) e \(B\) tais que
\[
	|f(u)| \leq Be^{\frac{b}{2}|u|^m}, \quad \arg u = \eta, \quad |u| \geq r.
\]

Portanto, ainda considerando \(\arg u = \eta\) e \(|u| \geq r\),
\[
	 \babs{ f(u)e^{ -(u/z)^m } mu^{m-1} } \leq Be^{ \frac{b}{2}|u|^m -\Re (u/z)^m } m|u|^{m-1}
\]
e, como
\[
\Re (u/z)^m = \frac{|u|^m}{|z|^m} \cos[m(\eta - \arg z)],
\]
para \(z\) satisfazendo
\[
	b|z|^m < \cos[m(\eta - \arg z)], 
\]
temos que
\[
	\int_0^\infty Be^{ \frac{b}{2}t^m -t^m \cos[m(\eta - \arg z)] } mt^{m-1} \dint t \leq \int_0^\infty Be^{ -\frac{b}{2}t^m} mt^{m-1} \dint t < \infty.
\]

Logo a integral \eqref{eq:temporaria1} converge absolutamente.

A afirmação de que a função definida em \eqref{eq:transformada_de_laplace} é holomorfa segue usando as equações de Cauchy-Riemann e derivação sob o sinal de integração.
\end{proof}

\begin{obs}
Uma pequena variação no parâmetro \(\eta\) não muda o valor da integral, mas apenas a região de convergência. Essa variação de \(\eta\) dá uma continuação analítica de \(g\), assim omitiremos o parâmetro \(\eta\).
\end{obs}

\begin{dfn} Nas mesmas condições do Lema \ref{lem:transformada_de_laplace}, usamos a notação \(g = \La_m f\) e dizemos que \(g\) é a \textbf{transformada de Laplace de índice m} de \(f\).
\end{dfn}

O próximo exemplo mostra o que acontece quando usamos transformada de Laplace em polinômios. Como a transformada de Laplace é linear, precisamos apenas calcular \(\La_{m}\) em monômios.

\begin{exe}
Se \(f(z) = z^\lambda\), com \(\Re \lambda > 0\), então
\[
	\bpar{ \La_m f }(z) = \Gamma(1 + \lambda/m) z^\lambda.
\]
De fato, para qualquer \(\eta \in \R\), temos
\[
	\begin{aligned}
		g(z) & = z^{-m}\int_0^\infty (te^{i\eta})^\lambda e^{ -(te^{i\eta}/z)^m } m (te^{i\eta})^{m-1} e^{i\eta} \dint t \\
			& = z^{\lambda}\int_0^\infty y^{\lambda/m}e^{-y} \dint y \\
			& = z^{\lambda}\Gamma(1 + \lambda/m).
	\end{aligned}
\]
Na segunda igualdade usamos a mudança de variável \(y = (te^{i\eta})^mz^{-m}\).
\end{exe}

O exemplo anterior mostrou o efeito da transformada de Laplace em polinômios. É natural questionar se é possível fazer o mesmo com séries, isto é, se aplicar a transformada de Laplace em uma função definida por uma série é o mesmo que aplicar a transformada termo a termo.

Como nem sempre podemos trocar o limite da soma com a transformada de Laplace, definiremos uma transformada de Laplace formal e depois estudaremos como a transformada de Laplace se relaciona com a transformada de Laplace formal.

\begin{dfn} Dada uma série formal \(\fs{f}(z) = \sum_{n=0}^\infty a_n z^n\), definimos a \textbf{transformada de Laplace formal de índice \(\boldsymbol{m}\)} pela fórmula
\[
	\fLa_m(\fs{f})(z) \doteq \sum_{n=0}^\infty a_n\Gamma(1 + n/m) z^n.
\]
\end{dfn}

O próximo teorema é a primeira informação que temos sobre como a transformada de Laplace se relaciona com a transformada de Laplace formal.

\begin{teo}
\label{laplaceassint} Sejam \(f\) holomorfa e de crescimento exponencial de ordem \(m\) em um setor \(S = S(\theta, \alpha)\) e \(g = \La_{m}f\). Para \(s \geq 1\), suponhamos que
\[
	f \gapprox{s} \fs{f} \quad \text{ em } \quad S,
\]
onde \(\fs{f}(z) = \sum_{k=0}^\infty a_k z^k/k! \in \C_{(s)}[[z]]\).

Se definirmos \(\tilde{s} = m^{-1} + s\) e tomarmos \(\fs{g}(z) = (\fLa_{m}\fs{f})(z) = \sum_{k=0}^\infty b_k z^k/k!\), então, para todo \(\epsilon > 0\), existe \(\rho = \rho(\epsilon) > 0\) tal que \(g\) é holomorfa em
\(S_\epsilon = S(\theta, \alpha + \pi(s - 1) - \epsilon,\rho)\)
e
\[
		g \gapprox{\tilde{s}} \fs{g} \quad  \text{ em } \quad S_\epsilon.
\]


\end{teo}

\begin{proof} Para um \(\delta > 0\), com \(\delta < \alpha\), consideremos os setores \(S_\delta = S(\theta, \alpha-\delta)\) e \(S_{\delta,1} = S(\theta, \alpha - \delta, 1)\).

Como \(f\) tem crescimento exponencial de ordem de ordem \(m\), existem constantes positivas \(B\) e \(b\) tais que
\[
	|f(z)| \leq Be^{b|z|^m}, \quad z \in S_\delta.
\]

Como \(f \gapprox{s} \fs{f}\) em \(S\), sabemos que \(\fs{f}\) é Gevrey-\(s\), então existem constantes positivas \(M\) e \(C\) tais que
\[
	\babs{ f(z) - \sum_{k=0}^{n-1} \frac{a_k}{k!}z^k } \leq MC^n\Gamma(1 + (s - 1)n)|z|^n, \quad z \in S_{\delta,1},~n = 0, 1, 2, \ldots,
\]
e
\[
	\babs{ \frac{a_k}{k!} } \leq MC^k\Gamma(1 + (s - 1)k),\quad k = 0, 1, 2, \ldots.
\]

Como \(f\) tem crescimento exponencial, eventualmente aumentando as constantes \(M\) e \(C\), obtemos
\[
	\babs{ f(z) - \sum_{k=0}^{n-1} \frac{a_k}{k!}z^k } \leq MC^n\Gamma(1 + (s - 1)n)|z|^n, \quad z \in S_{\delta},~n = 0, 1, 2, \ldots,
\]

Temos que, para \(z\) satisfazendo \(b|z|^m < \cos[m(\theta - \arg z)]\), vale
\[
	\La_m \bbrac{ f(u) - \sum_{k=0}^{n-1} \frac{a_k}{k!}u^k }(z) = g(z) - \sum_{k=0}^{n-1} \frac{b_k}{k!}z^k, \quad n = 1, 2, \ldots ,
\]
portanto, dado \(\epsilon > 0\), existe um \(\rho = \rho(\epsilon) > 0\) tal que, se \(z \in S_\epsilon  \) então \(b|z|^m < \cos[m(\theta - \arg z)]\).

Logo, para \(z \in S_\epsilon\)
\[
	\begin{aligned}
		\babs{ g(z) - \sum_{k=0}^{n-1} \frac{b_k}{k!}z^k }
			& \leq \frac{m}{|z|^m}\int_0^\infty \babs{f(te^{i\theta}) - \sum_{k=0}^{n-1} \frac{a_k}{k!}(te^{i\theta})^k} e^{-\Re(te^{i\theta}/z)^m}t^{m-1}\dint t \\
			& \leq MC^n\Gamma(1 + (s - 1)n) \frac{m}{|z|^m} \int_0^\infty t^n e^{-t^m\cos[m(\theta - \arg z)]/|z|^m}t^{m-1}\dint t \\
			& = MC^n\bpar { \frac{|z|}{cos(\theta - \arg z)} }^n \Gamma(1 + (s - 1)n) \Gamma(1 + n/m).
	\end{aligned}
\]
A última igualdade é obtida usando a fórmula da transformada de Laplace calculada em monômios.

Agora basta escolher \(\delta < \epsilon\) e usar a Fórmula de Stirling que a demonstração está concluída. \end{proof}

A próxima proposição nos dá informações sobre o núcleo da transformada de Laplace.

\begin{prp}
\label{laplacekernel} Seja \(f\) uma função contínua em \(\{ u =r e^{i\theta}: 0 \leq r < \infty\}\). Se existem constantes positivas \(B,~b\) e \(m\) tais que
\[
	|f(u)| \leq Be^{b|u|^m}
\]
e, se 
\begin{equation}
\label{eqnuclap}
	\int_0^{\infty e^{i\theta}} f(u) e^{ -(u/z)^m } u^{m-1}\dint u = 0,
\end{equation}
para todo \(z\) satisfazendo
\[
	0 < b|z|^m < \cos(m[\theta - \arg z]),
\]
então \(f = 0\).

\end{prp}

\begin{proof} Como rotações não mudam o crescimento exponencial, podemos assumir, sem perda de generalidade, que \(\theta = 0\).

Sabemos que \eqref{eqnuclap} está definida e é holomorfa em
\[
	0 < b|z|^m < \cos(m[\theta - \arg z]).
\]



Fazendo a mudança de variável \(v = u^m\), a expressão acima pode ser escrita da seguinte forma:
\begin{equation}
\label{ortogonalapolinomios}
	\int_0^{\infty} f(v^{1/m}) e^{-vz^{-m}} \dint v = 0. 
\end{equation}

Escrevendo \(w = z^{-m}\), temos que a expressão acima está definida se
\[
	0 < b|w|^{-1} < \cos(-m\arg w^{-1/m}).
\]

Denotando \(w = x + iy\) para \(x > 0\), como \(-m\arg w^{-1/m} = \arg w\) temos que \(\arg w = \arccos(x/(x^2 + y^2)^{1/2})\) e a expressão acima está definida para
\[
	0 < \frac{b}{\sqrt{x^2 + y^2}} < \frac{x}{\sqrt{x^2 + y^2}},
\]
isto é,  para \(x > b\).

Fixando \(x = b + 1\), a expressão \eqref{ortogonalapolinomios} fica
\[
	0 = \int_0^{\infty} f(v^{1/m}) e^{-v(b+1) - ivy} \dint v = \F\{H(v)f(v^{1/m}) e^{-v(b+1)} \}(y),
\]
onde \(H\) é a função de Heaviside.

Como \(H(v)f(v^{1/m}) e^{-v(b+1)}\) está em \(L^1(\R)\) e a transformada de Fourier dessa função é nula, podemos usar a transformada de Fourier Inversa, e concluir que a função \(H(v)f(v^{1/m}) e^{-vb}\) se anula identicamente. Disso segue que \(f\) se anula em \(\R_+\).
\end{proof}


Agora vamos definir a transformada de Borel e a transformada de Borel formal. Vamos ver que essa transformada é ideal para trabalhar com funções definidas em setores de raio finito e que ela define uma função de crescimento exponencial definida em um setor de raio infinito, que é exatamente o tipo de função que precisamos para trabalhar com a transformada de Laplace. 

Depois veremos que, eventualmente diminuindo o domínio de definição das funções, a transformada de Laplace e a transformada de Borel são uma inversa da outra.

 Suponha que \(m > 1/2\) e defina \(\gamma_{m}(\eta) = \gamma_{m}(\eta,\epsilon,R)\) a curva obtida percorrendo no sentido anti-horário a fronteira do conjunto \(S(\eta,(\epsilon + \pi)/m, R)\), onde \(R,~\epsilon\) são constantes positivas com \(\epsilon\) suficientemente pequeno para que \((\epsilon + \pi)/m < 2\pi\).

\begin{dfn}
Sejam \(m > 1/2\) e \(S = S(\theta, \alpha, \rho)\) um setor de abertura \(\alpha\) maior que \(\pi/m\). Escolhemos \(\eta, \epsilon\) e \(R\) tais que \(\gamma_m(\eta)\) esteja contida em \(S\). Se \(f\) é holomorfa em \(S\) e limitada na origem, podemos definir a \textbf{transformada de Borel de índice \(\boldsymbol{m}\)} de \(f\) por
\begin{equation}
	\bpar{ \Bo{m} f }(w) \doteq \frac{m}{2 \pi i} \int_{\gamma_m(\eta)} f(z) e^{(w/z)^m} z^{-1}\dint z, \quad w \in S(\eta, \epsilon/m).
\end{equation}

Observemos que do Teorema de Cauchy segue que a definição da transformada de Borel não depende das escolhas de \(\epsilon\) e \(R\), portanto esses valores serão omitidos.


\end{dfn}

Nos próximos resultados, em especial no Teorema \ref{borelassint}, ficará claro que a transformada de Borel está bem definida.

Análogo ao que fizemos com a transformada de Laplace, podemos pensar em qual seria o efeito de aplicar a transformada de Borel termo a termo a uma série. 

Para a próxima definição precisamos de uma outra representação integral da função Gamma, conhecida como fórmula de Hankel,

\begin{equation}
	\frac{1}{\Gamma(z)} = \frac{i}{2\pi} \int_C (-t)^{z-1}e^{-t} \dint t,
\end{equation}
onde \(C\) é o caminho que começa no ``infinito'' da reta real, contorna a origem no sentido antihorário e depois volta ao ponto de origem.

\begin{dfn} Se \(f(z) = z^\lambda\), a fórmula de Hankel nos diz que \[
	(\Bo{m}f)(u) = \frac{u^\lambda}{\Gamma(1 + \lambda/m),}
\] portanto, aplicando a transformada de Borel termo a termo a uma série formal
\[
	\fs{f}(z) = \sum_{k=0}^\infty a_k z^k,
\]
obtemos
\begin{equation}
	(\fBo{m}{\fs{f}})(z) \doteq \sum_{n=0}^\infty \frac{a_k}{\Gamma(1 + k/m)}z^k,
\end{equation}
a \textbf{transformada de Borel formal (de índice \(\boldsymbol{m}\))} de \(\fs{f}\).
\end{dfn}

Claramente a transformada de Borel formal é a inversa da transformada de Laplace formal. Essa é a primeira sugestão de que a transformada de Borel e a de Laplace são uma inversa da outra.

Além disso, a série obtida da transformada de Borel é uma série convergente, fato que provamos no lema a seguir.

\begin{lem}
\label{lem:raio_conv_borel}  Se \(\fs{f} \in \Cs{s}[[z]]\) e m = \((s - 1)^{-1}\), então \(\fBo{m} \fs f \) possui raio de convergência positivo.
\end{lem}
 
\begin{proof} Suponhamos que \(\fs{f}(z) = \sum_{k=0}^\infty a_k z^k/k!\). Como \(\fs{f}\) é de classe \(s\), existem constantes \(C,M > 0\) tais que, para todo \(k\), vale a estimativa
\[
	\begin{aligned}
	  \frac{\babs{ a_k }}{k!} \frac{ |t|^k }{\Gamma(1 + k/m))} & \leq MC^k \Gamma(1 + (s-1)k) \frac{ |t|^k }{\Gamma(1 + k/m)} \\
	  & = MC^k|t|^k.
	\end{aligned}
\]

Logo
\[
  \begin{aligned}
    \babs{ \B{m} \fs f (t) }
      & \leq \sum_{k=0}^\infty \frac{ \babs{a_k} }{k!} \frac{ |t|^k }{\Gamma(1 + k/m)} \\
      & \leq M \sum_{k=0}^\infty (C|t|)^k,
  \end{aligned}
\]
que converge para \(|t| < 1/C\).
\end{proof}

Com as transformadas que construímos até agora, já podemos obter mais um resultado importante para a Análise Assintótica de Gevrey. Conseguimos, sob certas condições, dar uma expressão para uma ``soma'' de uma série.

\begin{teo}
\label{cap:des:injetividade} Sejam \(s \geq 1\), \(\fs{f} \in \Cs{s}[[z]]\) e \(S = S(\theta,\alpha)\) um setor de abertura menor ou igual a \(\min\{2\pi, (s-1)\pi\}\). Nessas condições, existe uma função \(f \in \Hos{s}(S)\) tal que
\(
	f \gapprox{s} \fs{f} \text{ em } S .
\)
\end{teo}

\begin{proof} Suponha que \(\fs{f}\) possa ser escrita como
\[
	\fs{f}(z) = \sum_{n=0}^\infty \frac{a_n}{n!} z^n.
\]

Queremos definir uma função holomorfa \(f \in \Ho_{(s)}(S)\) que tem \(\fs{f}\) como expansão assintótica.

Por hipótese, temos que \(\fs{f}\) é Gevrey-\(s\). Se \(s = 1\), já sabemos que a série converge e define uma função holomorfa em uma vizinhança da origem. Vamos considerar o caso \(s > 1\). Nesse caso, escrevendo \(m = (s-1)^{-1}\) e usando o lema anterior, sabemos que a função
\[
	g(z) = (\fBo{m} {\fs f})(z) = \sum_{n=0}^\infty \frac{b_n}{n!} z^n,\]
é holomorfa, pois é definida por uma série que converge absolutamente em um disco de raio positivo.

Para \(\epsilon > 0\) e \(\theta\), o ângulo da reta que bissecta o setor \(S\), se \(z\) é tal que \(\cos[m(\theta - \arg z)] \geq \epsilon\) e tomando \(\rho > 0\) suficientemente pequeno, podemos definir
\begin{equation}
\label{somadeborel}
	f^\rho(z) = \frac{m}{z^m} \int_0^{\rho e^{i\theta}} g(u)e^{-(u/z)^m } u^{m-1}\dint u.
\end{equation}

A convergência da série que define \(g\) é absoluta no disco de raio \(\rho\), portanto vamos usar a seguinte expressão para \(f^\rho\):

\[
	f^\rho(z) =  \sum_{k=0}^\infty \frac{b_k}{k!} \frac{m}{z^m} \int_0^{\rho e^{i\theta}} u^k e^{ -(u/z)^m } u^{m-1}\dint u.
\]

Vamos provar que \(f^\rho \gapprox{s} \fs{f}\) em \(S\).

Sabemos que vale a seguinte igualdade:
\[
\begin{aligned}
	\sum_{k=0}^{n-1}\frac{a_k}{k!}z^k = \frac{m}{z^m} \int_0^{\infty e^{i\theta}} \sum_{k=0}^{n-1} \frac{b_k}{k!} u^k e^{ -(u/z)^m } u^{m-1}\dint u. 
\end{aligned}
\]

Portanto, podemos escrever
\[
\begin{aligned}
 f^\rho(z) - \sum_{k=0}^{n-1}\frac{a_k}{k!}z^k
 	 & =  \sum_{k=n}^\infty \frac{b_k}{k!} \frac{m}{z^m} \int_0^{\rho e^{i\theta}} u^k e^{ -(u/z)^m } u^{m-1}\dint u \\
		& - \sum_{k=0}^{n-1}\frac{b_k}{k!} \frac{m}{z^m} \int_{\rho e^{i\theta}}^{\infty(\theta)}  u^k e^{ -(u/z)^m } u^{m-1}\dint u  
\end{aligned}
\]
logo
\[
\begin{aligned}
	\babs{ f^\rho(z) - \sum_{k=0}^{n-1}\frac{a_k}{k!}z^k }
		& \leq  \sum_{k=n}^{\infty}\frac{|b_k|}{k!} \frac{m}{|z|^m} \int_0^\rho r^k e^{ -\epsilon(r/|z|)^m } r^{m-1} \dint r \\
		& + \sum_{k=0}^{n-1}\frac{|b_k|}{k!} \frac{m}{|z|^m} \int_\rho ^\infty r^k e^{ -\epsilon(r/|z|)^m } r^{m-1}\dint r .
\end{aligned}
\]

Fazendo a mudança de variável \(r \mapsto \rho r\) e usando que, para \(r \in [0,1]\) e \(k = n, n+1, \ldots\), vale \(r^k \leq r^n\) e que, \(r \in [1,\infty)\) e \(k = 1, 2 , \ldots, n-1\), vale \(r^k \leq r^n\), obtemos

\[
\begin{aligned}
	\babs{ f^\rho(z) - \sum_{k=0}^{n-1}\frac{a_k}{k!}z^k }
		& \leq   \sum_{k=0}^{\infty}\frac{|b_k|}{k!}\rho^{k+m} \frac{m}{|z|^m} \int_0^\infty r^n e^{ -\epsilon(\rho r/|z|)^m } r^{m-1} \dint r \\
		& =  M (\epsilon^{-m}\rho^{-1}|z|)^n\Gamma(1+n/m),
\end{aligned}
\]

onde \(M = \sum_{k=0}^{\infty}\frac{|b_k|}{k!}\rho^{k+m} < \infty\) pois a série é convergente. 

\end{proof}

\begin{obs}
Na demonstração acima, a função \(f^\rho\) está bem definida para qualquer \(\rho > 0\) suficientemente pequeno. Como \(\fs{f}\) é Gevrey-\(s\) para \(s > 1\), vale que \(g\) é não constante.

Dessa forma, sem perda de generalidade, podemos assumir que \(\theta = 0\) e que existe um \(\rho > 0\) pequeno tal que \(\Re g(\rho) > 0\).

Seja \(\rho' > \rho\) tal que \(g(\rho')\) está definida e \(g(r)\) não se anula para \(r \in (\rho, \rho')\). Nessas condições, temos que \(f^\rho \neq f^{\rho'}\). De fato, basta notar que
\[
	\Re \bsbrac{ f^{\rho'}(1) - f^{\rho}(1) } = m\int_\rho^{\rho'} \Re \bsbrac{g(r)} e^{r^m}  \dint r > 0.
\]

Portanto, ainda não temos uma maneira canônica de ``somar'' uma série divergente.
\end{obs}

Exatamente como fizemos com a transformada de Laplace, queremos saber como a transformada de Borel se relaciona com a transformada de Borel formal.

\begin{teo}
\label{borelassint} Seja \(S = S(\theta, \alpha, \rho)\) um setor arbitrário e \(f\) holomorfa em \(S\). Suponha que \(\fs{f}(z) = \sum_{n=0}^\infty a_n z^n/n!\) é uma expansão assintótica de ordem \(s > 1\) de \(f\) em \(S\). Seja \(m > 1/2\) satisfazendo \(\alpha > \pi/m\) de forma com que \(\Bo{m} f\) seja holomorfa em \(\tilde{S} = S(\theta, \alpha - \pi/m)\). Definimos \(\tilde{s}\) pela regra
\[
\tilde{s} =
	\begin{cases}
		s - m^{-1}, & \text{ se } s - m^{-1} > 1; \\
		1, & \text{ se } s - m^{-1} \leq 1.
	\end{cases}
\]
Então \((\Bo{m}f) \gapprox{\tilde{s}} (\fBo{m}\fs{f}) \text{ em } \tilde{S}\).
\end{teo}

\begin{proof} Tomando \(\eta\) suficientemente próximo de \(\theta\), podemos supor que \(\gamma_m(\eta)\) está contida em um subsetor de \(S\). Assim, existem constantes positivas \(M\) e \(C\) tais que
\[
	\babs{ f(z) - \sum_{k=0}^{n-1} \frac{a_k}{k!} z^k } \leq MC^nn!^{s - 1}|z|^n, \quad z \in \gamma(\eta), \quad n = 0, 1, 2 \dots .
\]

Vamos dividir a curva \(\gamma_m(\eta)\) em três e estimar a integral da definição da transformada de Borel em cada uma delas.

Sejam \(\eta_1 = \eta - (\epsilon + \pi)/(2m),\ \eta_2 = \eta + (\epsilon + \pi)/(2m)\). Para \(\psi = \eta_1, \eta_2\), definimos
\(\gamma_{\psi}(t) \doteq te^{i\psi}\), para \(t \in (0,R)\).

Considere \(\gamma_R(t) = Re^{it}\), para \(t \in (\eta_1, \eta_2)\). Se \(u \in S(\eta,\epsilon/(2m))\), definimos

\begin{equation}
I_\gamma(u) \doteq \int_{\gamma} \bsbrac{ f(z) - \sum_{k=0}^{n-1} \frac{a_k}{k!} z^k } e^{ (u/z)^m } z^{-1} \dint z, \quad \gamma = \gamma_{\eta_1}, \gamma_{\eta_2}, \gamma_R.
\end{equation}

Assim,
\[
	\Bo{m} \bsbrac{ f(z) - \sum_{k=0}^{n-1} \frac{a_k}{k!} z^k }(u) = \frac{m}{2\pi i} \bpar{ I_{\eta_1} + I_{R} - I_{\eta_2} }(u).
\]

Vamos começar fazendo a estimativa no arco. 

Temos que
\[
	\begin{aligned}
		| I_R(u) | & \leq mR^nMC^nn!^{s - 1} \int_{\eta_1}^{\eta_2} e^ { \Re \bpar { u^m/(Re^{it})^m } } \dint t \\
			& \leq mR^nMC^nn!^{s - 1}(\eta_2 - \eta_1)e^{ (|u|/R)^m }.
	\end{aligned}
\]

Para \(u\) fixado e \(n\) grande, podemos escolher \(R = |u|(n/m)^{-1/m}\), \(\tilde{M} = mM(\eta_2 - \eta_1)\) e \(\tilde{C} = Ce^{1/m}\). Assim
\[
	| I_R(u) | \leq \tilde{M}\tilde{C}^nn!^{s - 1}|u|^n(n/m)^{-n/m}.
\]

Agora vamos fazer a estimativa nas partes radiais.

Precisamos fazer a conta apenas para \(\psi = \eta_1\), pois a estimativa é análoga para o caso \(\psi = \eta_2\).

Como \(\eta_1 = \eta - (\epsilon + \pi)/(2m)\) e
\(
	-\epsilon/(4m) < \arg u - \eta < \epsilon/(4m),
\)
temos que
\[
	\pi/2 + \epsilon/4 < m(\arg u - \eta_1) < \pi/2 + 3\epsilon/4
\]
e, diminuindo \(\epsilon\) se necessário, existe \(c > 0\) tal que, para \(x \in (\pi/2 + \epsilon/4, \pi/2 + 3\epsilon/4)\), \(\cos(x) < -c\).

Com isso,
\[
	\Re (u/z)^m = (|u|/|z|)^m\cos \bsbrac { k(\arg u - \arg z) } < -c((|u|/|z|)^m)
\]

e obtemos a seguinte estimativa:
\[
	\begin{aligned}
		| I_\psi(u) | & \leq MC^nn!^{s-1} \int_0^R t^{n-1} e^{ \Re \bpar{u^m/(te^{i\psi})^m } } \dint t \\
			& \leq MC^nn!^{s-1} \int_0^R t^{n-1} e^{ -c \bpar{|u|/t }^m } \dint t \\
			& \leq MC^nn!^{s-1} \int_0^R t^{n-1} e^{ -c \bpar{|u|/t }^m } \dint t.
	\end{aligned}
\]

Fazendo a mudança de variável \(y = c(|u|/|z|)^m\), temos que
\[
	\begin{aligned}
	\int_0^R t^{n-1} e^{ -c \bpar{|u|/t }^m } \dint t & = c^{n/m}|u|^mm^{-1}\int_{cn/m}^\infty y^{-n/m-1} e^{ -y } \dint y \\
	& \leq |u|^nc^{-1}(n/m)^{-n/m-1}m^{-1}\int_0^\infty e^{-y} \dint y.
	\end{aligned}
\]

A demonstração pode ser facilmente concluída usando a Fórmula de Stirling.


\end{proof}

Nos dois próximos teoremas provaremos a afirmação feita anteriormente de que a transformada de Borel e a transformada de Laplace são inversas uma da outra.

\begin{teo}
\label{formulainversao} Com as mesmas notações do Teorema \ref{borelassint}.

Seja
\[
	g(u) = (\Bo{m}f)(u), \quad u \in \tilde{S}.
\]
Então \(g\) tem crescimento exponencial de ordem \(m\) em \(\tilde{S}\) e \(\La_m g\) é holomorfa em \(\hat{S} = S(\theta, \hat{\alpha},\hat{\rho})\), com \( \pi/m < \hat{\alpha} < \alpha\) e \(0 < \hat{\rho} \leq \rho\). 

Além disso, vale que
\[
	f(z) = (\La_m g)(z), \quad z \in \hat{S}.
\]
\end{teo}

\begin{proof} Primeiro vamos verificar que \(g\) tem crescimento exponencial de ordem \(m\).

Dividindo a curva \(\gamma_m\) em três partes e fazendo as estimativas exatamente como fizemos na demonstração do teorema anterior, vemos claramente que a integral sobre as partes radiais tendem para zero uniformemente quando \(u \to \infty\) em \(S(\eta,\epsilon/(2m))\).

A integral sobre o arco claramente define uma função inteira que possui possui crescimento exponencial de ordem \(m\). De fato, a integral no arco é dada por
\[	
	g_1(u) = \frac{1}{2 \pi i} \int_{\gamma_R} f(z)e^{(u/z)^m}z^{-1} \dint z,
\] que claramente está definida para qualquer \(u \in \C\). Falta então verificar que essa função tem crescimento exponencial de ordem \(m\). Temos que

\[
	\begin{aligned}
		|g_1(u)| & \leq \frac{R}{2 \pi} \int_{\eta_1}^{\eta_2} |f(Re^{it})|e^{\Re (u/Re^{it})^m} \dint t \\
		& \leq e^{(|u|/R)^m} \frac{R}{2 \pi} \int_{\eta_1}^{\eta_2} |f(Re^{it})| \dint t.
	\end{aligned}
\]


Combinando essas informações, vemos que \(g(u) = (\Bo{m})f(u)\) tem crescimento exponencial de ordem \(m\) em \(S(\eta,\epsilon/(2m))\). Variando \(\eta\) concluímos que o mesmo vale para \(\tilde{S}\).

Vamos provar agora que \(f = \La_{m}g\).

Como ambas as funções são holomorfas em \(\hat{S}\), basta provar a igualdade para \(z\) com \(\arg z = \theta\) e \(|z|\) suficientemente pequeno.

Temos que a seguinte integral converge:
\[
	\La_m (\Bo{m} f)(z) = \frac{m^2}{2\pi i z^m} \int_0^{\infty(\theta)} \bpar{ \int_{\gamma_m(\theta)} f(w) e^{(u/w)^m}w^{-1} \dint w } e^{-(u/z)^m} u^{m-1}\dint u
\]

Para trocar a ordem de integração, precisamos verificar as hipóteses do Teorema de Fubini.

Observemos que para \(w\) nas regiões radiais vale que
\[
	|f(w)w^{-1}u^{m-1}||e^{(u/w)^m}||e^{-(u/z)^m}| \leq |f(w)w^{-1}u^{m-1}|e^{-c|u/w| - |u/z|},
\]
para algum \(c > 0\), já que \(\arg (u/z) = 0\) e \(m \arg (u/w) > (\pi + \epsilon)/2\), para \(\epsilon > 0\) pequeno.

Quando \(w\) está no arco, vale que \(\cos(m \arg(u/w)) \leq 1\) e \(|w| > |z|\).

Assim, temos
\[
	|f(w)w^{-1}u^{m-1}||e^{(u/w)^m}||e^{-(u/z)^m}| \leq |f(w)w^{-1}u^{m-1}|e^{|u/w| - |u/z|}.
\]

Como \(|z|\) é pequeno, podemos primeiro integrar em \(u\). Além disso, podemos integrar em \(w\), pois o caminho \(\gamma_\theta(\theta)\) é compacto. 

Logo, podemos trocar a ordem de integração, obtendo:

\[
	\begin{aligned}
		\La_m (\Bo{m} f)(z) 
			& = \frac{m^2}{2\pi i z^m} \int_{\gamma_m(\theta)} f(w) \bpar{ \int_0^{\infty(\theta)} e^{ u^m(w^{-m}-z^{-m}) }u^{m-1}\dint u } w^{-1} \dint w \\
			& = \frac{1}{2\pi i z^m} \int_{\gamma_m(\theta)} \frac{w^{m-1} f(w)}{w^m - z^m} \dint w.
	\end{aligned}
\]

A função \(F_z(w) = (w^{m-1} f(w))/(w^m - z^m)\) possui, no interior de \(\gamma_m(\theta)\), uma única singularidade dada em \(w = z\), que é um polo de ordem \(1\) e resíduo \(f(z)/m\). Portanto, basta aplicar o Teorema do Resíduo e a demonstração estará concluída.
\end{proof}

\begin{prp} Nas mesmas condições do Teorema \ref{laplaceassint}, a função \(\Bo{m}g\) é holomorfa em \(S\) e \(f = \Bo{m}g\) em \(S\).
\end{prp}

\begin{proof} Do Teorema \ref{laplaceassint} e do Teorema \ref{borelassint}, temos que \(\tilde{f} = \Bo{m}g\) é holomorfa em \(S\).

Usando o Teorema \ref{formulainversao}, sabemos que \(\La_{m}\tilde{f} = g = \La_m f\) em um setor bissectado por \(\theta\). E, usando a Proposição \ref{laplacekernel}, obtemos que \(f = \tilde{f}\).
\end{proof}

Finalmente temos condições necessárias e suficientes para que uma série formal seja somável. É interessante observar aqui como o uso de funções com crescimento exponencial é importante na caracterização.

\begin{teo}
\label{caracdasoma} Seja \(s\) satisfazendo \(1 < s < 3\) e \(\fs{f} \in \C_s[[z]]\). Então, escrevendo \(m = (s - 1)^{-1}\), segue que \(g(u) = (\fBo{m}{\fs{f}})(u)\) é holomorfa em uma vizinhança da origem. São equivalentes as seguintes afirmações:
	\begin{enumerate}
	
		\item \label{teoseilaitema} Existem um setor \(S = S(\theta,\alpha,\rho)\), com \(\alpha > (s-1)\pi\), e uma função holomorfa \(f \in \Ho_{(s)}(S)\) tal que \(f \gapprox{s} \fs{f}\) em \(S\).

		\item \label{teoseilaitemb} Existe um setor \(\tilde{S} = S(\theta, \epsilon)\) de raio infinito tal que \(g\) admite continuação holomorfa em \(\tilde{S}\)  e possui crescimento exponencial de ordem \(m\) nesse conjunto.
	\end{enumerate}
	Além disso, se qualquer um dos itens anteriores for verdadeiro, então \(f = \La_m g \) e portanto f é unicamente determinada por \(\fs{f}\). 
\end{teo}

\begin{proof} \(\eqref{teoseilaitema} \implies \eqref{teoseilaitemb}\): Suponha que valem as condições do item \ref{teoseilaitema}. Do Teorema \ref{borelassint}, sabemos que \(\Bo{m} f\) está definida e é holomorfa em \(S(\theta, \epsilon)\), onde \(\epsilon = \alpha - \pi/m)\).

Além disso, \(\Bo{m} f \gapprox{1} \fBo{m} f\) em \(S(\theta,\epsilon)\), isto é, \(g = \fBo{m} f\) é holomorfa em uma vizinhança da origem e admite continuação holomorfa \(S(\theta,\epsilon)\).

Usando o Teorema \ref{formulainversao}, sabemos que \(g\) tem crescimento exponencial de ordem \(m\) em \(S(\theta, \epsilon)\).

\(\eqref{teoseilaitemb} \implies \eqref{teoseilaitema}\):
Suponha que valem as condições do item \ref{teoseilaitemb}. Como \(g = \fBo{m} f\) em uma vizinhança da origem, temos que \(g \gapprox{1} \fBo{m} f\).
Usando o teorema \ref{laplaceassint}, temos que existem \(\alpha > (s-1)\pi\) e \(\rho > 0\) tais que \(\La_m g \gapprox{s} \fs{f} = \fLa_{m} \fBo{k} \fs{g} \) em \(S(\theta, \alpha, \rho)\). Finalmente, o teorema \ref{formulainversao} garante que \(f = \La_{m} g\).

Observemos que como \(g\) é holomorfa, a série \(\fBo{m}\fs{f}\) está unicamente determinada e, portanto, \(f\) também é unicamente determinada por \(\fs f\).

\end{proof}

O teorema anterior não apenas dá uma nova demonstração da injetividade da Aplicação de Taylor para setores suficientemente grandes, como também dá uma condição necessária e suficiente para que uma série seja somável. 

Se provarmos que existe uma série formal que é Gevrey-s, mas que sua transformada de Borel formal de ordem \((s - 1)^{-1}\) não admite continuação holomorfa em nenhum setor de raio infinito, então a série não pode ser somável. Construiremos uma série com essa característica no próximo exemplo.

\begin{exe}
Seja \(s\) satisfazendo \(1 < s < 3\) e tomemos
\[
	\fs{f}(z) = \sum_{k=0}^\infty \Gamma \bsbrac{ 1 + 2^k(s - 1) } z^{2^k}.
\]

Vale que \(\fs{f} \in \C_{(s)}[[z]]\) e, para \(m = (s - 1)^{-1}\), a função
\[
	g(z) = (\fBo{m} \fs{f})(z) = \sum_{k=0}^\infty z^{2^k}
\]
está definida e é holomorfa no disco unitário.

A função \(g\) claramente satisfaz a seguinte família de equações funcionais:
\[
	g(z) = \sum_{k=0}^{n-1} z^{2^k} + g(z^{2^n}), \quad n \geq 1.
\]
Tome \(z_{j,n} = e^{2\pi i j 2^{-n}}\), para \(j,n \geq 1\). Observemos que, calculando \(g\) em \(rz_{j,n}\) e usando a equação funcional acima, obtemos
\[
	g(rz_{j,n}) = \sum_{k=0}^{n-1} r^{2^k}e^{2\pi i j 2^{k-n}} + g(r^{2^n}e^{2\pi i j}).
\]

O que a expressão acima nos mostra é que, se existisse o limite
\[
	\lim _{r \to 1} g(rz_{j,n}),
\]
ele teria que ser igual a
\[
	\sum_{k=0}^{n-1} e^{2\pi i j 2^{k-n}} + g(1).
\]
No entanto, \(g\) não pode ser definida holomorficamente em \(1\). Assim, como a sequência \(\{z_{j,n} : j,n = 1, 2, ...\}\) é densa no círculo unitário, segue que \(g\) não pode ser holomorficamente continuada para fora do disco unitário.

Portanto, não vale o item \ref{teoseilaitemb} do teorema anterior para tal \(\fs{f}\) e, consequentemente, também não vale o item \ref{teoseilaitema}.
\end{exe}

Finalmente obtemos um dos teoremas mais importantes deste texto, que nos dá informações precisas sobre a injetividade e sobrejetividade da transformada de Taylor.

	\begin{teo}
	\label{cap-desassgev:tays} Seja \(s > 1\) e considere a aplicação de Taylor
\[
	\tays{s}:\Hos{s}(S) \to \Cs{s}[[z]]
\]
definida em \eqref{cap-dev:aplicacao_de_taylor}.
		\begin{enumerate} 
		  \item Se o setor \(S\) tiver abertura menor ou igual a \(\min\{(s-1)\pi, 2\pi\}\), então \(\tays{s}\) é sobrejetiva mas não é injetiva;
		  \item Se \(s < 3\) e o setor \(S\) tiver abertura maior que \((s-1)\pi\), então a aplicação \(\tays{s}\) não é sobrejetiva, mas é injetiva.
		\end{enumerate}
	\end{teo}


\section{Relação com séries convergentes}

Nesta seção estamos interessados em responder a seguinte questão: se aplicarmos as técnicas estudadas neste trabalho para somar séries divergentes a uma série convergente, então a soma que encontramos é a mesma que o limite da série?

A resposta é sim. Começaremos provando um resultado que nos permite saber sob quais condições podemos aplicar a teoria desenvolvida.

\begin{prp} Seja \(f(z) = \sum_{k=0}^\infty a_kz^k/k!\) uma função inteira. Então \(f\) tem crescimento exponencial de ordem 1 se, e somente se, a série \(\sum_{k=0}^\infty a_kz^k\) tem raio de convergência positivo e finito.
\end{prp}

\begin{proof}

\((\implies)\) Suponhamos que \(f\) tem crescimento exponencial de ordem 1. Então existem contantes positivas \(b\) e \(B\) tais que
\[
	|f(z)| \leq Be^{b|z|}, \quad z \in \C.
\]

Usando a Fórmula de Cauchy, para \(R > 0\), temos
\[
	a_k = \frac{k!}{2\pi i} \int_{|z| = R} \frac{f(z)}{z^{k+1}} \dint z, \quad k = 1, 2, \ldots ,
\]
portanto vale a desigualdade:
\[
	|a_k| \leq k!B\frac{e^{bR}}{R^k}.
\]

Para cara \(k\) natural e \(R > 0\), seja \(\lambda_k(R) = \frac{e^{bR}}{R^k}\). Usando Cálculo Diferencial, sabemos que o mínimo dessa função é atingido em \(R = k/b\) e obtemos a seguinte desigualdade:
\[
	|a_k| \leq k!B\frac{e^kb^k}{k!} \leq B(eb)^k.
\]

Logo \(\sum_{k=0}^\infty a_k z^k\) tem raio de convergência positivo e finito.

\((\impliedby)\) Suponhamos agora que \(\sum_{k=0}^\infty a_k z^k\) tem raio de convergência positivo e finito. Então existem constantes positivas \(M,~C > 0\) tais que
\[
	|a_k| \leq MC^k, \quad k = 1, 2, \ldots,
\]
portanto
\[
	|f(z)| = \babs{ \sum_{k=1}^\infty \frac{a_k}{k!}z^k } \leq M \sum_{k=0}^\infty \frac{(C|z|)^k}{k!} = Me^{C|z|}.
\]
Como queríamos.
\end{proof}

Seja \(\fs{f}(z) = \sum_{k=0}^\infty a_kz^k\) uma série com raio de convergência finito. Essa série define uma função holomorfa \(f\) em uma vizinhança da origem.

Como \(\Cs{s}[[z]] \subset \Cs{s'}[[z]]\), se \(s < s'\), então \(\fs{f}(z) \in \Cs{2}[[z]]\). Portanto podemos tratar a série \(\fs{f}\) como se fosse Gevrey-2.

Tomemos \(g(z) = \fBo{1}{\fs{f}}(z)\) a transformada de Borel de \(\fs f\). Pela proposição anterior, sabemos que \(g\) não só está definida em uma vizinhança da origem, como também tem crescimento exponencial de ordem \(1\).

Podemos então aplicar a transformada de Laplace em \(g\) em qualquer direção \(\theta \in [-\pi,\pi]\) e obter uma soma de Borel da série \(\tilde{f}\) definida em um setor bissectado por \(\theta\) com abertura maior que \(\pi\). 

Agora, usando a injetividade da aplicação de Taylor, temos que \(\tilde{f}\) é igual a \(f\) nesse setor. Isto é, a função \(\tilde{f}\) pode ser holomorficamente continuada para um aberto contendo a origem. Portanto obtemos uma soma clássica da série.

O resultado acima pode então ser enunciado mais precisamente da seguinte forma:

\begin{prp} Seja \(f(z) = \sum_{k=0}^\infty a_kz^k\) uma série com raio de convergência finito. Se definirmos \(g(z) = \sum_{k=0}^\infty a_kz^k = \fBo{1}{f}(z)\), então \(f\) é uma continuação holomorfa da transformada de Laplace de \(g\).
\end{prp}

Esse último resultado confirma que, em um certo sentido, a teoria que desenvolvemos até aqui é uma generalização natural do conceito de convergência.


%

\chapter{Equações Diferenciais Ordinárias}

Neste capítulo, faremos aplicações em Equações Diferenciais Ordinárias.

Primeiro, vamos ilustrar que a Análise Assintótica de Gevrey se aplica em EDO. Para isso, voltaremos à Equação de Euler e mostraremos que a soma da solução formal vista no exemplo \ref{exe:soma_de_euler} é uma solução.

Como a teoria foi desenvolvida usando Análise Complexa, para fazermos aplicações em Equações Diferencias Ordinárias definidas em domínios reais, conectaremos as duas teorias enunciando alguns resultados de Equações Diferencias com domínios no plano complexo.

Ao conectar a teoria de Equações Diferenciais com Análise Complexa, uma abordagem natural é procurar soluções usando séries formais, porém, devido a existência de pontos irregulares, conceito que definiremos a seguir, pode ser que a série não convirja.

Portanto, para estudar a convergência, introduziremos o Polígono de Newton. Com ele é possível calcular a classe de Gevrey de uma solução formal de uma equação diferencial, o que nos permite aplicar os conceitos vistos no capítulo anterior.

Comecemos relembrando da equação e da série de Euler.

\begin{exe}
\label{cap:edo:exe:euler}
 Vimos, no exemplo \ref{des:equacao_de_euler}, que a série formal
\[
	\fs{f}(z) = \sum_{k = 1}^\infty (-1)^{k-1}(k-1)! z^k
\]
é uma solução formal da Equação de Euler
\begin{equation}
\label{exe:equacao_de_euler}
	\left\{
		\begin{aligned}
			z^2f' + f & = z; \\
			f(0) & = 0.
		\end{aligned}
	\right.
\end{equation}

Além disso, no exemplo \ref{exe:soma_de_euler}, encontramos uma função \(f\) que possui \(\fs{f}\) como expansão assintótica de ordem \(2\) no setor \(\{z \in \C: \Im z > 0\}\).

A função \(f\) é dada pela seguinte expressão:
\[
	f(z) = \ddint{0}{\infty} { \frac{ 1 }{1 + t} e^{-t/z} }{t}, \quad \Re z > 0.
\]

Naturalmente surge a seguinte pergunta: a função \(f\) é uma solução da Equação de Euler?

Para verificarmos, derivamos sob o sinal de integração,
\[
	f'(z) = \frac{1}{z^2}\int_0^{\infty} \frac{t}{1 + t} e^{-t/z} \dint t,
\]
e substituímos na equação
\[
	f(z) + z^2f'(z) = \int_0^\infty e^{-t/z} \dint t = z.
\]
Portanto, \(f\) é, de fato, uma solução da equação diferencial \eqref{exe:equacao_de_euler} em \(\Re z > 0\). 
\end{exe}

Nesse exemplo, foi possível facilmente ver que a soma é uma solução formal, pois encontramos uma expressão explícita para ela. No entanto, isso nem sempre acontece, como veremos nas aplicações a seguir. Nesses casos, as condições para a injetividade da aplicação de Taylor serão fundamentais para determinar se a soma é uma solução.


\section{Equações Diferenciais Complexas}

Nesta seção, introduziremos alguns conceitos de Equações Diferenciais Ordinárias para fazer uma conexão com a teoria de Análise Complexa. Começamos apresentando alguns resultados cujas demonstrações podem ser encontradas no capítulo X do livro \textit{A Course of Modern Analysis} de \textit{Whittaker and Watson}, \cite{whittaker1996course}.

Consideraremos Equações Diferenciais Lineares de Segunda Ordem na forma

\begin{equation}
\label{cap:edo:equation}
	\del^2_z u + p(z)\del_z u + q(z) u = 0,
\end{equation}
com \(p\) e \(q\) funções holomorfas em um aberto conexo \(\Omega\), exceto em um número finito de polos.

\begin{dfn}
Um ponto de \(\Omega\) em que \(p\) e \(q\) são holomorfas é chamado de \textbf{ponto ordinário} da equação. Outros pontos de \(\Omega\) serão chamados de \textbf{pontos singulares}.
\end{dfn}

Se \(b \in \Omega\) é um ponto ordinário da equação, como \(p\) e \(q\) são holomorfas em \(b\), existe \(r = r(b) > 0\) tal que \(D_r(b) \subset \Omega\) consiste apenas de pontos regulares.

Podemos enunciar o primeiro resultado.

\begin{prp}
\label{cap:edo:prp:ordinaria}
Sejam \(b\) um ponto ordinário da equação \eqref{cap:edo:equation} e \(r > 0\)  tal que \(D_r(b) \subset \Omega\) possui apenas pontos ordinários. Nessas condições, dados \(z_0,z_1 \in \C\), existe uma única solução holomorfa em \(\D_r(b)\) tal que \(u(b) = z_0,~u'(b) = z_1\). 
\end{prp}


Sabendo que existe uma solução \(u\) de \eqref{cap:edo:equation} que é holomorfa em \(D_r(b)\) e que satisfaz \(u(b) = z_0\) e \(u'(b) = z_1\), com \(z_0\) e \(z_1\) números complexos dados arbitrariamente, a maneira mais simples de encontrar tal solução é assumir que \(u(z) = \sum_{k=0}^\infty a_k (z - b)^k\), substituir a série na equação diferencial e encontrar uma relação entre os termos \(a_k\) para \(k = 0, 1, 2, \ldots . \)

\begin{dfn}
Suponhamos que \(c \in \Omega\) é um ponto onde \(p\) ou \(q\) possuem polos e que a ordem desses polos é tal que as funções \((z - c)p,~(z - c)^2q\) são holomorfas em uma vizinhança aberta de \(c\). O ponto \(c\) será chamado de \textbf{ponto regular}. Os pontos em que \(p\) ou \(q\) não satisfazem essa condição serão chamado de \textbf{pontos irregulares}.
\end{dfn}

Se \(c\) é um ponto regular, podemos reescrever a equação \eqref{cap:edo:equation} da seguinte maneira:
\begin{equation}
\label{cap:edo:equation2}
	(z - c)^2 \del^2_z u + (z - c)P(z - c) \del_z u + Q(z - c) u = 0,
\end{equation}
em que \(P\) e \(Q\) são funções holomorfas em \(c\).
Assim, para encontrar uma solução, primeiro escrevemos a série formal
\[
\fs{u}(z) = (z - c)^\alpha\bbrac{ 1 + \sum_{n=1}^\infty a_n(z - c)^n}
\]
e, em seguida, substituímos na equação e determinamos as constantes \(\alpha, a_1, ..., a_n\).

Resumindo, basta derivar formalmente e substituir na equação. Sob algumas condições, conseguimos encontrar uma relação entre os termos \(a_k\). Com essa relação, conseguimos provar que esses termos crescem obedecendo as estimativas de Cauchy, e portando a série \(\fs{u}(z)/(z - c)^\alpha\) define uma função holomorfa em uma vizinhança aberta do ponto \(c\).

O procedimento gera uma solução e, em alguns casos, até duas soluções, mas a presença de polos nos coeficientes impede que em geral encontremos um sistema fundamental de soluções. De qualquer forma, temos o seguinte resultado:

\begin{prp}
\label{cap:edo:prp:regular} Se \(c\) é um ponto regular da equação diferencial \eqref{cap:edo:equation2}, então existem \(\alpha\) e uma solução \(u\) da equação tal que \(u(z)/(z - c)^\alpha\) é holomorfa em uma vizinhança aberta de \(c\).
\end{prp}



\section{Polígono de Newton}

Nos dois casos que verificamos acima, foi possível verificar que a equação possui soluções holomorfas em determinados abertos de \(\Omega\). Porém, se tentarmos o mesmo procedimento em pontos irregulares, podemos não encontrar uma série convergente.

Existe uma maneira simples de saber quando uma solução formal da equação \eqref{cap:edo:equation} é convergente e, caso não seja convergente, conseguimos garantir não só que solução formal é uma série de Gevrey, como também conseguimos saber quais são as possíveis ordens de Gevrey da série.

Embora nosso interesse seja principalmente estudar Equações Diferenciais Ordinárias Lineares de Segunda Ordem, vamos enunciar algumas ferramentas que são aplicáveis para equações diferenciais lineares em geral. 

Seja \(P\) um operador diferencial linear de ordem \(n\) com coeficientes em \(\C\{z\}\),
\[
	P = P(z,\del_z) = \sum_{j=0}^n c_j(z)\del_z^j,
\]
onde \(c_n \in \C\{z\}\) é não nulo. Para cada função holomorfa \(c_j\), definimos \(o_j\) como a ordem da função \(c_j\) em \(0\).

Seja \(\Qua\) o segundo quadrante do plano \(\R^2\). Definimos \(\Qua(x,y) = \Qua + (x,y)\).

\begin{dfn} O \textbf{polígono de Newton} de \(P\) em 0, denotado por \(N(P)\), é a envoltória convexa dos conjuntos \(\Qua(j, o_j -j)\) para os \(j\) tais que \(c_j \neq 0\) em \(\C\{z\}\). Mais precisamente,
\[
	N(P) \doteq ch\bpar{ \bigcup_{j=0,~c_j \neq 0}^n \Qua(j, o_j -j) }.
\]
Denotaremos por \(0 = k_0 < k_1 < \ldots < k_l\) as inclinações dos lados de \(N(P)\).
\end{dfn}

O teorema a seguir, que foi enunciado por J.P. Ramis em \cite{ramis1978devissage}, nos diz que o Polígono de Newton nos dá uma  ``medida de irregularidade'' do operador \(P\).

\begin{teo}
\label{cap:edp:teo:ramis} Seja \(\fs{u}(z) = \sum_{j=0}^\infty a_j z^j/j! \in \C[[z]]\) tal que \(P\fs{u} = g \in \C\{z\}\). Então existe um número racional \(s \geq 1\) tal que \(\fs{u}(z) \in \Cs{s}[[z]]\), mas \(\fs{u}(z) \notin \Cs{s'}[[z]]\) para nenhum \(s'< s\). O número \(s\) é da forma \(s = 1 + 1/k\) com \(k \in \{k_1, k_2, ..., k_l\}\), o conjunto das inclinações positivas do Polígono de Newton de \(P\).
\end{teo}

Seguimos a convenção de que \(1/\infty = 0\), nesse caso \(s = 1\) e a solução formal é convergente.

Uma demonstração desse teorema pode ser encontrada em \cite{ramis1984theoremes}. Esse teorema possui diversas generalizações que podem ser encontradas, junto de aplicações em Sistemas Dinâmicos, em \cite{ramis1993series}.

Vejamos alguns exemplos:

\begin{exe} Seja \(P\) o operador definido na equação \eqref{cap:edo:equation}. Supondo que \(0 \in \Omega\) e que \(0\) é um ponto ordinário desse operador, temos que as funções \(p\) e \(q\) são holomorfas em \(0\), portanto a ordem delas, denotadas respectivamente por \(o_p\) e \(o_q\), em \(0\), é maior ou igual a \(0\). Logo o polígono de Newton de \(P\) é a envoltória convexa do conjunto \(\Qua(0,o_q) \cup \Qua(1,o_p-1) \cup \Qua(2,-2)\), ilustrada na figura \ref{fig:eq1}.

\begin{figure}[H]
\centering
\includegraphics{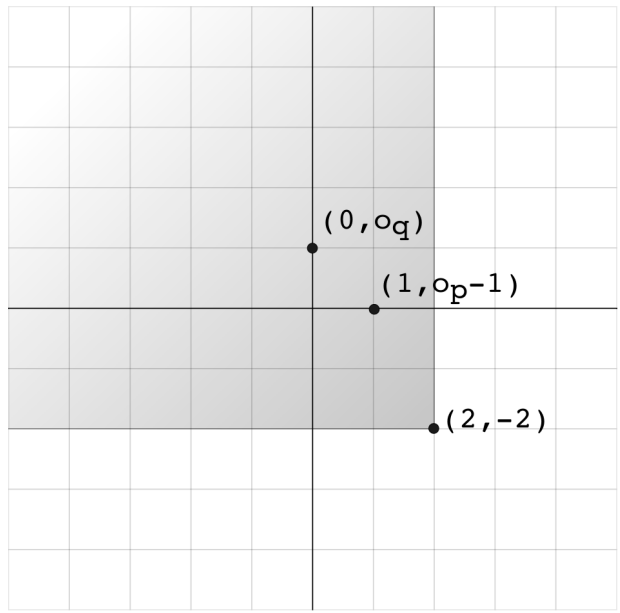}
\caption{Polígono de Newton do operador P, definido em \eqref{cap:edo:equation}. Nesse exemplo, \(o_q\) e \(o_p\) podem assumir qualquer inteiro não negativo. Para ilustrar, tomamos \(o_q = o_p = 1\). }
\label{fig:eq1}
\end{figure}

Como \(o_q \geq 0\) e \(o_p-1 \geq -1\), segue que \(N(P) = \Qua(2,-2)\) e a única inclinação positiva é \(k_1 = \infty\). Portanto, de acordo com o teorema anterior, as soluções formais desse sistema são convergentes em uma vizinhança da origem.
\end{exe}

\begin{exe} Supondo que \(0 \in \Omega\), consideremos agora \(P\) o operador definido em \ref{cap:edo:equation2} com \(c = 0\). Como \(P\) e \(Q\) são holomorfas em \(0\), supondo \(P \neq 0\), se denotarmos por \(o_P\) a ordem de \(zP(z)\) em \(0\) e por \(o_Q\) a ordem de \(Q\) em \(0\), temos que o polígono de Newton de \(P\) é a envoltória convexa do conjunto \(\Qua(0,o_Q) \cup \Qua(1,o_P - 1) \cup \Qua(2,0)
\), ilustrada na figura \ref{fig:eq2}.

\begin{figure}[H]
\centering
\includegraphics{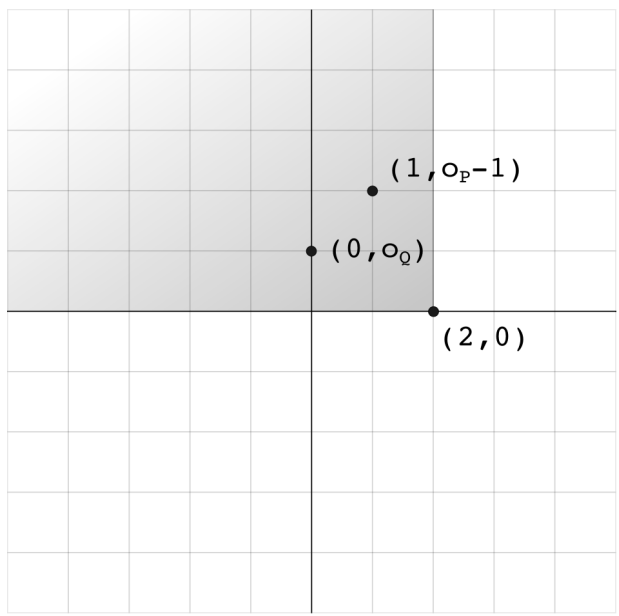}
\caption{Polígono de Newton do operador P, definido em \eqref{cap:edo:equation2}. Nesse exemplo, \(o_Q \geq 0\) e \(o_P \geq 1\). Para ilustrar, tomamos \(o_Q = 1\) e \(o_P = 3\).}
\label{fig:eq2}
\end{figure}

Como \(o_P \geq 1\) e \(o_Q \geq 0\), temos que \(N(P) = \Qua(2,0)\) e novamente uma solução formal desse sistema converge em uma vizinhança aberta da origem.
\end{exe}

 Os exemplos anteriores nos mostram que o Teorema \eqref{cap:edp:teo:ramis} é uma generalização da Proposição \eqref{cap:edo:prp:ordinaria} e da Proposição \eqref{cap:edo:prp:regular}.
 
 Motivados pela Equação de Euler, chamaremos de Operador de Euler o seguinte operador: \(P = z^2\del_z + 1\).
 
\begin{exe} O Polígono de Newton do operador de Euler é a envoltória convexa do conjunto \(\Qua(0,0) \cup \Qua(1,1)\), ilustrado na figura \ref{fig:eqeuler}.

\begin{figure}[H]
\centering
\includegraphics{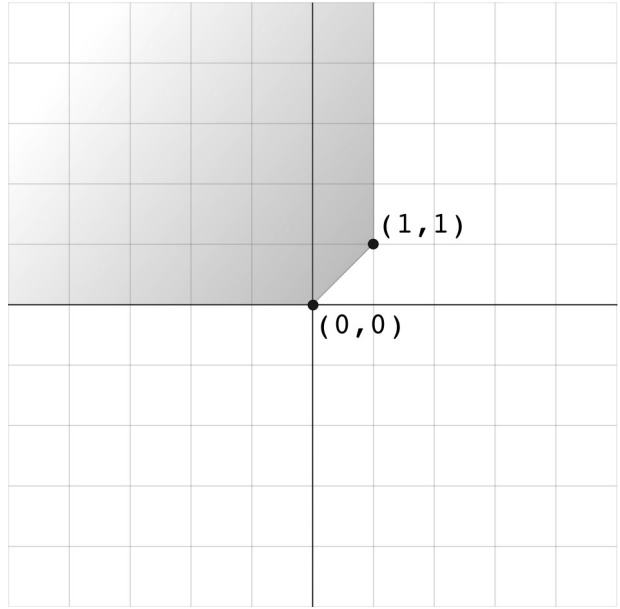}
\caption{Polígono de Newton do operador de Euler.}
\label{fig:eqeuler}
\end{figure}

Portanto, as soluções formais desse operador serão Gevrey-\(2\) ou Gevrey-\(1\).

No exemplo \ref{cap:edo:exe:euler}, vimos que a solução formal da equação diferencial \(P\fs{u} = z\) é Gevrey-\(2\). Facilmente conseguimos ver que a solução formal do problema \(P\fs{u} = 1\) é \(\fs{u} = 1\), que é claramente holomorfa, e portanto Gevrey-\(1\).
\end{exe}

Para demonstrar o Teorema \ref{cap:edp:teo:ramis}, é necessário usar ferramentas e técnicas que estão além dos objetivos deste trabalho.
No entanto, podemos provar que o teorema vale para o Operador de Euler. A demonstração pode ser feita usando apenas técnicas elementares.

\begin{prp}
\label{cap:edo:operadordeeuler} Seja \(P = z^2\del_z + 1\) o Operador de Euler. Suponha que \(\fs{f} \in \C[[z]]\) é uma série formal tal que \(P\fs{f} = g\), onde \(g \in \C\{z\}\). Então, ou \(\fs{f} \in \C\{z\}\), ou  \(\fs{f} \in \Cs{2}[[z]]\) e \(\fs{f} \notin \Cs{s}[[z]]\) para nenhum \(s < 2\).
\end{prp}

Antes de demonstrar esse resultado, vamos provar o seguinte lema que nos dá uma condição suficiente para que a classe de Gevrey de uma série formal seja ótima.

\begin{lem} Suponha que \(\fs{f}(z) = \sum_{k=0}^\infty a_kz^k \in \Cs{s}[[z]]\) seja tal que \(\sum_{k=0}^\infty a_kz^k/k!^{s-1}\) tem raio de convergência finito. Então não existe \(s' < s\) tal que \(\fs{f}(z) \in \Cs{s'}[[z]]\).
\end{lem}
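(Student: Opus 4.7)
O plano é proceder por contradição, usando o teste de Cauchy--Hadamard. Suponho que exista $s' < s$ com $\fs{f} \in \Cs{s'}[[z]]$, e vou mostrar que isso força o raio de convergência de $\sum_{k \geq 0} a_k z^k/k!^{s-1}$ a ser infinito, contradizendo a hipótese.

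O primeiro passo é invocar a definição de série de Gevrey de ordem $s'$: existem constantes $C, M > 0$ tais que $|a_k| \leq C M^k k!^{s'-1}$ para todo $k \geq 0$. Dividindo essa desigualdade por $k!^{s-1}$, obtenho a estimativa $|a_k|/k!^{s-1} \leq C M^k k!^{s'-s}$, na qual o expoente $s'-s$ é \emph{estritamente negativo}.

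O passo chave é calcular o $\limsup$ da raiz $k$-ésima do lado direito. Como $k!^{1/k} \to \infty$ (pela fórmula de Stirling, $k!^{1/k} \sim k/e$) e $s' - s < 0$, segue que
\[
\limsup_{k \to \infty} \left( \frac{|a_k|}{k!^{s-1}} \right)^{1/k} \leq M \cdot \limsup_{k \to \infty} \left(k!^{1/k}\right)^{s'-s} = 0.
\]
Pelo teste de Cauchy--Hadamard, isto significa que $\sum a_k z^k/k!^{s-1}$ tem raio de convergência infinito, contradizendo a hipótese de raio finito.

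A demonstração é essencialmente uma aplicação direta das definições e do teste da raiz, não vejo obstáculos significativos; o único ponto potencialmente delicado é o controle assintótico de $k!^{1/k}$, que se resolve com Stirling.
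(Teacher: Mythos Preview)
Your proof is correct and uses essentially the same ingredients as the paper's: contradiction, the Cauchy--Hadamard formula, and the fact that $k!^{1/k}\to\infty$. The only difference is the direction in which you run the argument: the paper starts from the finite radius $R$ of $\sum a_k z^k/k!^{s-1}$ and shows that $\sum a_k z^k/k!^{s'-1}$ has radius $0$ (by multiplying $\sqrt[k]{|a_k|/k!^{s-1}}$ by $\sqrt[k]{k!^{s-s'}}\to\infty$), whereas you start from the Gevrey-$s'$ bound and show that $\sum a_k z^k/k!^{s-1}$ would have infinite radius. Your version is arguably a little cleaner, since it avoids the small subtlety about $\limsup$ of a product that the paper's computation implicitly uses.
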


\begin{proof} Por hipótese a série \(\sum_{k=0}^\infty a_kz^k/k!^{s-1}\) tem raio de convergência \(R \in (0,\infty)\). Sabemos que
\[
	\frac{1}{R} = \limsup_{k \to \infty} \sqrt[k]{ \frac{|a_k|}{k!^{s-1}} }.
\]

Se tivéssemos \(\fs{f}(z) \in \Cs{s'}[[z]]\), então a série \(\sum_{k=0}^\infty a_kz^k/k!^{s'-1}\) também seria convergente. Vamos mostrar que esse não é o caso.

Vamos calcular o raio de convergência. Temos
\[
	\sqrt[k]{ \frac{|a_k|}{k!^{s'-1}} } = \sqrt[k]{ \frac{|a_k|}{k!^{s-1}} k!^{s-s'} } = \sqrt[k]{ \frac{|a_k|}{k!^{s-1}}}\sqrt[k]{ k!^{s-s'} }.
\]
Como \(\limsup_{k \to \infty} \sqrt[k]{ k!^{s-s'} } = \infty\) e \(R\) é finito, segue que
\[
	\limsup_{k\to\infty}\sqrt[k]{ \frac{|a_k|}{k!^{s'-1}} } = \frac{1}{R}\infty = \infty.
\]

Provamos que o raio de convergência de \(\sum_{k=0}^\infty a_kz^k/k!^{s'-1}\) é 0 e portanto não vale \(\fs{f}(z) \in \Cs{s'}[[z]]\). Como queríamos.
\end{proof}

\begin{proof}[Demonstração da Proposição \ref{cap:edo:operadordeeuler}] Escrevendo \(\fs{f}(z) = \sum_{k=0}^\infty a_k z^k\), \(g(z) = \sum_{k=0}^\infty b_k z^k\) e procedendo como usualmente, facilmente conseguimos ver que a equação:
\[
	z^2\del_z \fs{f} + \fs{f} = \fs{g}
\]
é equivalente ao seguinte sistemas de equações:
\[
	\left\{\begin{aligned}
		a_0 &= b_0\\
		a_1 &= b_1\\
		a_1 + a_2 &= b_2\\
			&\ \ \!\vdots\\
		na_n + a_{n+1} & = b_{n+1}\\
			&\ \ \!\vdots\\
	\end{aligned}\right.
\]

Para \(n \geq 1\), multiplicamos a \(n\)-ésima linha do sistema acima por
\[
	\frac{(-1)^{n-1}}{(n-1)!}
\]
e obtemos outro sistema equivalente de equações:
\begin{equation}
	\left\{\begin{aligned}
		a_0 &= b_0\\
		-a_1 &= -b_1\\
		a_1 + a_2 &= b_2\\
			&\ \ \!\vdots\\
		\frac{(-1)^{n+1}a_{n+1}}{n!} & = - \bpar{ b_1 - b_2 + \ldots + \frac{(-1)^nb_{n+1}}{n!} } \\
			&\ \ \!\vdots\\
	\end{aligned}\right.
\end{equation}

Como \(g \in \C\{z\}\), existem constantes \(M,~C > 0\) tais que
\[
	\babs{ b_k } \leq MC^k, \qquad k = 1, 2, \ldots,
\]
portanto
\[
	\babs{ b_1 - b_2 + \ldots + \frac{(-1)^n b_{n+1}}{n!} } \leq MCe^C
\]
e claramente \(\fs{f} \in \Cs{2}[[z]].\)

Agora vamos tentar responder a seguinte pergunta: será que \(\fs{f} \in \Cs{s}[[z]]\) para algum \(s\) satisfazendo \(1 \leq s < 2\)?

Como \(\sum b_kz^k/k!\) é uma função inteira, podemos definir
\[
	\alpha(g) = b_1 - b_2 + \ldots + \frac{(-1)^n b_{n+1}}{n!} + \ldots.
\]

Precisamos tratar duas possibilidades: \(\alpha(g) \neq 0\) e \(\alpha(g) = 0\).

Suponhamos \(\alpha(g) \neq 0\). Nesse caso, o termo geral \(a_k\) da série \(\fs{f}\) não tende a zero quando \(k \to \infty\), portanto a série \(\sum_{k=0}^\infty a_k/k!\) não converge.

Assim concluímos que a série \(\sum_{k=0}^\infty a_kz^k/k!\) tem raio de convergência finito. Usando o lema anterior, concluímos que não existe \(s < 2\) tal que \(\sum_{k=0}^\infty a_kz^k \in \Cs{s}[[z]].\)

Suponhamos \(\alpha(g) = 0\). Nesse caso,
\[
	\begin{aligned}
	\frac{(-1)^n a_n}{(n-1)!} & = \frac{(-1)^n a_n}{(n-1)!} + \alpha(g)\\ & = \frac{(-1)^n b_{n+1}}{n!} + \frac{(-1)^{n+1} b_{n+2}}{(n+1)!} + \ldots
	\end{aligned}
\]
e obtemos a seguinte expressão para \(a_n\):
\[
	a_n = \frac{b_{n+1}}{n} - \frac{b_{n+2}}{n(n+1)} + \frac{b_{n+3}}{n(n+1)(n+2)} - \ldots.
\]

Como \(g \in \C\{z\}\), existem constantes positivas \(M,~C\) tais \(|b_n| \leq MC^n\) e portanto
\[
	\begin{aligned}
	|a_n| & \leq \sum_{k=1}^\infty \frac{|b_{n+k}|}{n(n+1) \ldots (n+k-1)} \\
		& \leq MC^n \sum_{k=0}^\infty \frac{C^k}{n(n+1)\ldots(n+k-1)}.
	\end{aligned}
\]

Do teste da razão, sabemos que a série
\[
	\sum_{k=1}^\infty \frac{C^k}{n(n+1)\ldots(n+k-1)} \doteq c_n, \quad n = 1, 2, \ldots
\]
converge.

Além disso, \(c_n \geq c_{n+1}\) e vale a desigualdade:
\[
	|a_n| \leq c_1MC^n, \quad n = 1, 2, \ldots.
\]
Assim \(\fs{f}\) é uma função holomorfa. 
\end{proof}

\section{Equações Diferenciais Reais}

Nesta seção, veremos como a Análise Assintótica pode ser usada para obter soluções de Equações Diferenciais em \(\R\).

Sejam \(p,~q\) e \(f\) funções em \(\C\{t\}\), com \(t\) real, e consideremos a seguinte equação:
\begin{equation}
	t^2\frac{\dint^2}{\dint t^2}u + p(t)\frac{\dint}{\dint t}u + q(t)u = f.
\end{equation}

Como \(p,~q\) e \(f\) são definidas por séries convergentes em uma vizinhança de \(0\), podemos passar o problema para uma variável complexa. Por simplicidade, representaremos por \(p,~q\) e \(f\) as extensões holomorfas dessas funções. Temos então uma equação na forma já considerada nas seções anteriores:
\[
	z^2 \del_z^2u + p\del_zu + qu = f.
\]

Se \(p\) e \(q\) se anularem em \(t = 0\) de ordem maior ou igual a \(2\), podemos considerar o caso homogêneo dessa equação e, usando a Proposição \ref{cap:edo:prp:ordinaria}, conseguimos um sistema fundamental de soluções.

Restringindo essas soluções a um intervalo de \(\R\), temos um sistema fundamental de soluções da equação diferencial real, portanto, basta usar o método da variação dos parâmetros para obter a solução que procuramos.

Mas, se estivermos em um caso mais geral, por exemplo, se, em \(z = 0\), \(p\) tiver um zero de ordem \(1\) e \(q\) tiver um zero de ordem \(0\), estamos nas condições da Proposição \ref{cap:edo:prp:regular}. Assim, eventualmente, podemos não conseguir um sistema fundamental do caso homogêneo, portanto pode ser que não encontremos uma solução holomorfa do caso não homogêneo.

Apesar disso, em alguns casos, conseguimos, usando Análise Assintótica de Gevrey, encontrar uma solução do caso real definida em um intervalo aberto que contém \(0\) e que é analítica em todo o intervalo, exceto em \(0\).

Antes de formalizarmos a afirmação acima, vamos provar o seguinte resultado:

\begin{teo} Seja \(P = P(z,\del_z)\) um operador diferencial linear com coeficientes holomorfos em uma vizinhança de \(0\). Suponhamos que \(\fs{u} \in \Cs{s}[[z]]\), com \(1 < s < 3\), satisfaz \(P\fs{u} = f \in \C\{z\}\). Se \(g = \fBo{(s-1)^{-1}}\fs{u}\) se estende holomorficamente e com crescimento exponencial de no máximo \((s-1)^{-1}\) a um setor \(S(\theta,\epsilon)\), para algum \(\theta \in [-\pi,\pi]\) e algum \(\epsilon > 0\), então existe uma função \(u \in \Hos{s}(S),~S = S(\theta,\alpha,\rho)\), \(\alpha > (s-1)\pi\) tal que \(u \gapprox{s} \fs{u}\) em \(S\) e vale que \(Pu = f\).
\end{teo}

\begin{proof} Como \(g\) se estende holomorficamente, com decrescimento exponencial de no máximo \((s-1)^{-1}\), ao setor \(S(\theta,\epsilon)\), o Teorema \ref{caracdasoma} garante que existe \(u \in \Hos{s}(S)\) satisfazendo \(u \gapprox{s} \fs{u}\). Além disso, a Proposição \ref{homo} garante que \(Pu \gapprox{s} P\fs{u}\).

Observemos que \(P\fs{u} = f\) em um disco de raio \(r > 0\), que denotamos por \(D_r(0)\). Isso é equivalente a dizer que \(f \gapprox{1} P\fs{u}\) em \(\tilde{S} = S \cap D_r\), logo também vale que \(f \gapprox{s} P\fs{u}\) em \(\tilde{S}\).

Portanto \(\tays{s}(Pu) = \tays{s}(f)\). Como a abertura de \(\tilde{S}\) é igual a abertura de \(S\), que é maior que \((s-1)\pi\), temos que a aplicação de Taylor é injetiva e assim \(Pu = f\) em \(\tilde{S}\). Logo \(Pu = f\) em \(S\). 
\end{proof}

Finalmente, podemos mostrar como a teoria que estudamos pode ser usada para obter resultados em Equações Diferenciais Reais. Vamos considerar um exemplo e depois enunciaremos o resultado.

\begin{exe} Considere agora o problema \(z^2\del_zu + iu = iz\). Com um procedimento análogo ao que fizemos para tratar a Equação de Euler, encontramos uma solução formal desse problema na forma
\[
	\fs{u}(z) = \sum_{k=1}^\infty i^{k-1}(k-1)!z^k
\]
e vemos que \(\fs{u}\) é expansão assintótica de ordem \(2\) da função
\[
	u(z) = \int_0^\infty \frac{1}{1 -it} e^{-t/z} \dint t.
\]

Essa função pode ser estendida holomorficamente ao aberto \(\Omega = \C \bs \{ri: r \leq 0 \}\). Usando o teorema anterior, sabemos que ela é solução da EDO em todo esse aberto.

Observemos que, dado um subsetor \(S \ssubset \Omega\) que contém \(\R\bs\{0\}\), do Lema \ref{cap:des:limorigem} sabemos que podemos estender continuamente u e todas as suas derivadas para 0. Portanto, \(u\) restrita a \(\R \bs \{0\}\) pode ser suavemente estendida a \(\R\) e essa extensão é uma solução em \(\R\) de:

\[
	\left\{
		\begin{aligned}
			t^2 \frac{\dint}{\dint t} u(t) + iu(t) & = ix; \\
			u(0) & = 0.
		\end{aligned}
	\right.
\]

\end{exe}

Agora podemos enunciar o seguinte corolário do teorema anterior.

\begin{cor} Seja \(P = P(t,\dint/\dint t)\) um operador diferencial  linear com coeficientes analíticos. Suponha que a versão complexificada do operador \(P = P(z,\del_z)\) esteja nas condições da teorema anterior. Suponha ainda que o setor em que a função \(g\) se prolonga seja \(S(\pi/2,\epsilon)\). Nessas condições, além de a função \(u\) satisfazer \(Pu = f\) em um aberto do plano complexo, a restrição de \(u\) a \(S(\pi/2,\alpha,\rho) \cap \R\) e todas as suas derivadas se estendem até \(0\). Essa restrição de \(u\) estendida até \(0\) é uma solução real de \(P(t,\dint/\dint t)\).
\end{cor}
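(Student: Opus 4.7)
A estratégia é reduzir o corolário ao teorema anterior e, em seguida, transferir a solução complexa para o eixo real invocando o Lema \ref{cap:des:limorigem}, exatamente como foi feito no exemplo que precede o enunciado.

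Primeiro, aplicaria o teorema anterior com \(\theta = \pi/2\) para obter uma função \(u \in \Hos{s}(S)\) no setor complexo \(S = S(\pi/2, \alpha, \rho)\), com \(\alpha > (s-1)\pi\), satisfazendo \(u \gapprox{s} \fs{u}\) em \(S\) e \(Pu = f\) em \(S\). Por estar bisectado pela direção imaginária positiva e possuir abertura grande o suficiente (o caso de interesse é quando \(\alpha > \pi\)), o setor \(S\) intersecta o eixo real, e o conjunto \(I \doteq S(\pi/2,\alpha,\rho) \cap \R\) é um aberto de \(\R\) cujo fecho contém a origem.

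Em seguida, mostraria que a restrição \(u|_I\) e todas as suas derivadas se estendem continuamente a \(0\). Como \(u \gapprox{s} \fs{u}\) em qualquer subsetor fechado de \(S\), e em particular em um subsetor que contenha os raios do eixo real, o Lema \ref{cap:des:limorigem} assegura tal extensão: as estimativas de Gevrey das derivadas, uniformes em subsetores fechados, garantem a existência dos limites de \(u^{(j)}(z)\) quando \(z \to 0\) ao longo de \(\R\), com valores-limite dados pelos coeficientes da série formal \(\fs{u}\) multiplicados por \(j!\). Isso produz uma extensão \(C^\infty\) de \(u|_I\) até \(t = 0\).

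Por fim, para verificar que essa extensão é solução real da equação \(P(t,\dint/\dint t)u = f\), restringiria a identidade complexa \(Pu = f\), válida em todo \(S\), ao subconjunto \(I\). Como os coeficientes do operador complexificado são as extensões holomorfas dos coeficientes analíticos reais originais e a derivação \(\del_z\) coincide com \(\dint/\dint t\) quando aplicada sobre o eixo real, a equação complexa se reduz à equação real em \(I\). Passando ao limite \(t \to 0\), a continuidade de \(u\) e de todas as suas derivadas (estabelecida no passo anterior), junto com a analiticidade de \(f\) em \(0\), permite concluir que a identidade persiste em \(t = 0\). O obstáculo principal é justamente justificar essa continuidade simultânea de \(u\) e de todas as suas derivadas na origem: é aí que a hipótese de regularidade Gevrey, e não apenas de holomorfia, é essencial, sendo o Lema \ref{cap:des:limorigem} o ingrediente-chave. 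O restante é uma tradução direta da identidade complexa fornecida pelo teorema anterior.
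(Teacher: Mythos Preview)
A sua proposta está correta e segue essencialmente a mesma estratégia do artigo: invocar o teorema anterior para obter \(Pu = f\) no setor complexo e depois aplicar o Lema~\ref{cap:des:limorigem} para garantir a extensão suave de \(u\) e de todas as suas derivadas até a origem. A demonstração do artigo é de fato uma única linha que apenas cita esse lema; você apenas tornou explícitos os passos de restrição ao eixo real e de passagem ao limite em \(t=0\), que o artigo deixa implícitos.
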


\begin{proof} Resta apenas mostrar que a solução se estende suavemente até a origem. Para isso, basta aplicar o Lema \ref{cap:des:limorigem}. 
\end{proof}

\backmatter \singlespacing   
\bibliographystyle{alpha} 
\bibliography{bibliografia}  


\printindex   

\end{document}